\documentclass[11pt]{article}
\usepackage{amsmath}
\usepackage{amsfonts}
\usepackage{bm}
\usepackage{amsmath}
\usepackage{cases}
\usepackage{color}

\newtheorem{thm}{Theorem}
\newtheorem{lemm}{Lemma}[section]
\newtheorem{rem}{Remark}
\newtheorem{prop}{Proposition} 

\newtheorem{rem*}[rem]{{\color {red} Remark}}

\title{Extremal Trudinger-Moser inequality associated with many point vortices with multi-intensities}

\author{
Takashi Suzuki\thanks{Center for Mathematical Modeling and Data Science, Unit of Mathematical Science, Osaka University. Email: suzuki@sigmath.es.osaka-u.ac.jp} and  
Yohei Toyota\thanks{Department of Liberal Studies, National Institute of Technology, Nara College, Email: y-toyota@libe.nara-k.ac.jp} 
}

\date{}

\begin{document}
  \maketitle
\begin{abstract} 
We study the boundedness of extremal 2D Trudinger-Moser inequality associated with many point vortices with multi-intensities. Sufficient conditions for continuously-distributed measures are given in one-sided and two-sided cases.  
\end{abstract}
\textbf{Keywords.} Trudinger-Moser inequality, blowup analysis, mean field equation of point vortices, Y.Y. Li estimate, Onsager's theory \\
\textbf{MSC(2020)} 35J61, 35Q70

\section{Introduction}\label{sec1}

Our purpose is to show the boundedness of extremal 2D Trudinger-Moser inequality associated with many point vortices with multi-intensities. 

Let $\Omega$  be a compact Riemann surface without boundary, $P(d\alpha)$ be a probability measure on $I=[-1,1]$ satisfying $1\in \mbox{supp} \ P$, and $\lambda>0$ be a constant. The elliptic equation 
\begin{equation} 
-\Delta v=\lambda\int_I\alpha\left( \frac{e^{\alpha v}}{\int_\Omega e^{\alpha v}}-\frac{1}{\vert \Omega\vert}\right) P(d\alpha), \ \int_\Omega v=0 
 \label{2}
\end{equation}  
arises in Onsager's theory \cite{ons49} as the the mean field limit of the system of many point vortices on $\Omega$. Here, their intensities are distributed as in $P(d\alpha)$, and $\lambda$ stands for the negative inverse temperature (\cite{eyink06, ss08}). 

It is the Euler equation of the variational functional 
\begin{equation} 
J_\lambda(v)=\frac{1}{2}\Vert \nabla v\Vert_2^2-\lambda\int_I\left( \log \int_\Omega e^{\alpha v}\right)P(d\alpha), \ v\in E
 \label{1}
\end{equation} 
for 
\[ E=\{ v\in H^1(\Omega) \mid \int_\Omega v=0\}. \] 
Let  
\begin{equation} 
j_\lambda=\inf_EJ_\lambda. 
 \label{j1}
\end{equation} 
There is an extremal value of $\lambda$ for $j_\lambda>-\infty$ denoted by $\overline{\lambda}$. If the inequality 
\begin{equation} 
j_{\overline{\lambda}}>-\infty, 
 \label{3.5}
\end{equation} 
is valid and this $j_{\overline{\lambda}}$ is not attained, an order structure is realized in this systems as a singular limit of the solution to (\ref{2}). 

If $P=\delta_1$, it holds that $\overline{\lambda}=8\pi$ and also  (\ref{3.5}). 
It is also known that $\overline{\lambda}=16\pi$ if $P=\frac{1}{2}(\delta_1+\delta_{-1})$ as well as inequality (\ref{3.5}) (\cite{os06}).  It holds generally that 
\begin{equation} 
\overline{\lambda}=\inf\left\{ \frac{8\pi P(K_\pm)}{\left(\int_{K_\pm}\vert \alpha\vert P(d\alpha)\right)^2}\mid K_\pm \subset I_\pm \cap \mbox{supp $P$, Borel set}\right\} 
 \label{3}
\end{equation}  
for 
\[ I_+=[0,1], \ I_-=[-1,0]. \] 
Therefore, we have $j_\lambda>-\infty$ for $\lambda<\overline{\lambda}$, while $\lambda>\overline{\lambda}$ implies $j_{\lambda}=-\infty$ (\cite{rs14}). 

Boundedness of $j_{\overline{\lambda}}$ as in (\ref{3.5}) is known if $P(d\alpha)$ is a finite sum of delta-functions (\cite{ors10} Theorem 7.1). 
Here we study continuously distributed $P(d\alpha)$, and show the following theorems, where 
\[ \alpha_\pm =\min\{ I_\pm \cap \mbox{supp} \ (\pm P)\}. \]  
\begin{thm}[one-sided measure]\label{thm2} 
If 
\begin{equation} 
\mbox{supp} \ P\subset I_+, \ \alpha_+>\frac{1}{2}. 
 \label{9.2}
\end{equation} 
it holds that (\ref{3.5}). 
\end{thm}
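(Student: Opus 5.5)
We argue by blowup analysis along a minimizing sequence of subcritical problems. For $\lambda<\overline{\lambda}$ the functional $J_\lambda$ is bounded below and coercive, so $j_\lambda$ is attained by some $v_\lambda\in E$ solving (\ref{2}). Moreover $\lambda\mapsto j_\lambda$ is nonincreasing, and $j_{\overline{\lambda}}=\lim_{\lambda\uparrow\overline{\lambda}}j_\lambda$ (for a fixed $v$, $J_\lambda(v)$ is continuous in $\lambda$). Hence it suffices to show that $J_{\lambda_k}(v_k)$ stays bounded below for a sequence $\lambda_k\uparrow\overline{\lambda}$ with minimizers $v_k=v_{\lambda_k}$. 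If $\Vert\nabla v_k\Vert_2$ stays bounded, this is immediate. So we assume $\Vert\nabla v_k\Vert_2\to\infty$ and aim for a contradiction, or at least for a lower bound on $J_{\lambda_k}(v_k)$.

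\emph{Step 1 (normalization).} Since $\mbox{supp}\,P\subset[\alpha_+,1]$ with $\alpha_+>0$, we have $\max v_k\to+\infty$. Set
\[ \mu_k(dx)=\lambda_k\int_I\alpha\frac{e^{\alpha v_k}}{\int_\Omega e^{\alpha v_k}}P(d\alpha)\,dx . \]
This is a nonnegative measure of mass $\lambda_k\int_I\alpha P(d\alpha)$. Passing to a subsequence, $\mu_k\rightharpoonup\mu$, and blowup occurs on a finite set $\mathcal S$ of points where $\mu$ has atoms. The Brezis--Merle alternative applies because the right-hand side is positive and $e^{\alpha v}\le e^{v}+1$ when $\alpha\le 1$.

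\emph{Step 2 (local mass quantization).} At a blowup point $x_0$, rescale around the maximum point with $\varepsilon_k=e^{-\max v_k/2}$. Only intensities with $\alpha$ near $1$ keep a nontrivial rescaled density. Here the hypothesis $\alpha_+>1/2$ is used: with a Y.Y.~Li type estimate $v_k\ge \max v_k - 2\log(1+|x/\varepsilon_k|^2)+O(1)$, or equivalently the $\sup+\inf$ inequality, the contribution of the intensity $\alpha$ in the neck region is controlled by $e^{\alpha v_k}\sim\varepsilon_k^{-2\alpha}(1+|y|)^{-4\alpha}$. This decays integrably and concentrates only when $4\alpha>2$, i.e.\ $\alpha>1/2$. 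Every intensity therefore either concentrates fully at $x_0$ or contributes a vanishing amount. This yields the mass identity
\[ \lambda\int_{K}\alpha P(d\alpha)\cdot\bigl(\text{local weight}\bigr)=8\pi P(K) \]
for the set $K$ of concentrating intensities, matching the Pohozaev identity. Comparing with (\ref{3}) forces $\lambda=\overline{\lambda}$ with the extremal set $K$ attained, and shows that $\mathcal S$ is a single point.

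\emph{Step 3 (energy lower bound).} Once there is a single bubble with quantized mass, we estimate $J_{\lambda_k}(v_k)$ directly. Away from $x_0$, $v_k-\overline{v_k}$ converges to $\overline{\lambda}$ times a Green function, and the intensities outside $K$ stay bounded. Near $x_0$, we compare with the standard bubble, as in the Ding--Jost--Li--Wang computation for $P=\delta_1$. This gives
\[ \tfrac12\Vert\nabla v_k\Vert_2^2-\lambda_k\int\log\int e^{\alpha v_k}\,P(d\alpha)\ge -C . \]
The logarithmic divergences $\pm 8\pi\log\varepsilon_k^{-1}$ cancel exactly because of the identity from Step 2 at $\lambda=\overline{\lambda}$. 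So $j_{\lambda_k}\ge -C$ uniformly, and (\ref{3.5}) follows.

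The main obstacle is Step 2. With a continuum of intensities, mass can leak from intensities $\alpha$ close to but below the extremal range, and one needs uniform control of $\int_I\alpha\, e^{\alpha v_k}/\int_\Omega e^{\alpha v_k}\,P(d\alpha)$ in the neck region. The first attempt will be to use the radial sup+inf comparison together with $4\alpha_+>2$ to obtain a uniform integrable tail bound over all $\alpha\in\mbox{supp}\,P$, and then dominated convergence in $\alpha$.
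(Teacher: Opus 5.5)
\textbf{There is a genuine gap: Step~2 is circular and does not prove the mass identity.} Your outer frame matches the paper: minimizers $v_k$ for $\lambda_k\uparrow\overline{\lambda}$, upper semicontinuity of $\lambda\mapsto j_\lambda$, blowup analysis, a single bubble, and an energy bound via a Y.Y.~Li type estimate.

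In Step~2 you feed in a sup+inf / Li type bound to control the neck. No such estimate is available a priori for this equation with a continuum of intensities. In the paper, (\ref{9}) with (\ref{14.2}) is the \emph{output} of Lin's tail argument (Proposition~\ref{pro10}). Its inputs are two separately proved facts, residual vanishing $s^+=0$ and the mass identity $n^+(x_0)=\beta_0$, and your sketch establishes neither. Note also that a \emph{lower} bound on $v_k$, as you wrote it, cannot bound the neck mass from above. Your profile $-2\log(1+|x/\varepsilon_k|^2)$ is the $P=\delta_1$ bubble. Here the bubble mass is $\beta_0=8\pi/\int_{I_+}\alpha P(d\alpha)$, and the relevant integrability condition is $\alpha_+>4\pi/\beta_0$.

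Residual vanishing is proved at the macroscopic scale, not from a tail bound. From (\ref{5}), the local relation (\ref{6}) and $\alpha\le 1$ one gets $n^+(x_0)\ge 8\pi$. Hence the singular limit satisfies $v\ge 4\log\frac{1}{d(x,x_0)}-C$. Since $4\alpha_+>2$, this gives $\int_\Omega e^{\alpha v_k}\to\infty$ for every $\alpha\in\mbox{supp}\,P$, which kills all normalized densities off $\mathcal S$. The mass and first-moment balances plus (\ref{6}) then give $\sharp\mathcal S=1$ and $\overline{\lambda}=8\pi/(\int_{I_+}\alpha P(d\alpha))^2$. So (\ref{3}) is attained by $I_+$ itself (Proposition~\ref{pro4}).

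The mass identity is the real difficulty, and your sketch points the wrong way on it. Each intensity is normalized by its own $\int_\Omega e^{\alpha v_k}$, so the relevant scale is $\sigma_k=e^{-(\max v_k-\log\int_\Omega e^{v_k})/2}$, not $e^{-\max v_k/2}$. Consider intensities whose peak $\alpha\max v_k-\log\int_\Omega e^{\alpha v_k}$ falls behind $\max v_k-\log\int_\Omega e^{v_k}$ by an unbounded amount. They are invisible at scale $\sigma_k$ yet still concentrate at $x_0$, so their mass sits in the neck. Your claim that only intensities near $1$ survive rescaling is exactly the scenario to be excluded, since it means $\beta_0<n^+(x_0)$.

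The paper excludes it by combining two ingredients:
\begin{itemize}
\item the structure of the scaling limit (Proposition~\ref{pro9}): $\beta_0=\overline{\lambda}\int_{I_d}\alpha P(d\alpha)$ for an upper interval $I_d$, together with the limiting relation $8\pi P(I_d)=\overline{\lambda}(\int_{I_d}\alpha P(d\alpha))^2$;
\item the \emph{strict} minimality of $K_+=I_+$ in (\ref{3}) among upper intervals (Proposition~\ref{pro5}), which is a second, independent use of $\alpha_+>\frac12$.
\end{itemize}
Together these force $I_d$ to carry all of $P$ (Lemmas~\ref{lem4.1}--\ref{lem4.2}, with extra care for an atom at $\alpha_+$).

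Finally, in Step~3 exact cancellation ``at $\lambda=\overline{\lambda}$'' is not enough. With $\beta_0=8\pi/\int_{I_+}\alpha P(d\alpha)$, the part of $J_{\lambda_k}(v_k)$ that diverges with $\log\sigma_k^{-1}$ is a positive multiple of $(\beta_0-\beta_k)\log\sigma_k^{-1}$. You therefore need the one-sided bound $\beta_k\le\beta_0$ from (\ref{p3}), not merely $\beta_k\to\beta_0$.
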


\begin{thm}[two-sided measure]\label{thm3}
If 
\begin{equation} 
I_\pm\cap \mbox{supp} \ P \neq \emptyset, \ \alpha_\pm>\frac{1}{2}, 
 \label{7x}
\end{equation} 
it holds that (\ref{3.5}). 
\end{thm}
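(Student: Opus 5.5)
We argue by contradiction with a blowup analysis, in the same way as the one-sided case of Theorem \ref{thm2}. Take $\lambda_k\uparrow\overline{\lambda}$ and minimizers $v_k\in E$ of $J_{\lambda_k}$. These exist because $\lambda_k<\overline{\lambda}$, so $J_{\lambda_k}$ is coercive by the improved inequality behind (\ref{3}). Each $v_k$ solves (\ref{2}) with $\lambda=\lambda_k$. If $\|v_k\|_\infty$ stays bounded, elliptic estimates give convergence to a function $v$ with $J_{\overline{\lambda}}(v)=\lim j_{\lambda_k}\ge j_{\overline{\lambda}}$. Since $j_\lambda$ is nonincreasing in $\lambda$, this yields (\ref{3.5}). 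So we assume $\|v_k\|_\infty\to\infty$.

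Write $\mu_k^\pm$ for the positive and negative parts of the right-hand side, $\lambda_k\int_{I_\pm}|\alpha| \frac{e^{\alpha v_k}}{\int e^{\alpha v_k}}P(d\alpha)$. Pass to a subsequence so that $\mu_k^\pm\rightharpoonup \mu^\pm$ as measures. The blowup set is $\mathcal S=\mathcal S_+\cup\mathcal S_-$, where $\mathcal S_\pm$ collects the points at which $\pm v_k\to+\infty$. The first step is a mass quantization at each blowup point $x_0$. Rescale around the local maximum of $\pm v_k$. Since $\alpha\ge\alpha_\pm>1/2$ on the relevant side, only intensities near the maximal scale contribute, and we aim for a Brezis--Merle/Y.Y. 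Li type sup+inf estimate showing that the local masses $m_\pm(x_0)=\mu^\pm(\{x_0\})$ satisfy the Pohozaev-type identity
\[
\Bigl(\int_{I_+}\alpha\, m_+(x_0,d\alpha)-\int_{I_-}|\alpha|\, m_-(x_0,d\alpha)\Bigr)^2
=8\pi\bigl(m_+(x_0,I)+m_-(x_0,I)\bigr).
\]
The key use of $\alpha_\pm>1/2$ is to exclude simultaneous positive and negative concentration at the same point, i.e.\ to show $\mathcal S_+\cap\mathcal S_-=\emptyset$. At a point where $v_k\to+\infty$, the negative part $e^{\alpha v_k}$ with $\alpha<0$ is exponentially small. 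Conversely, because $\alpha_\pm>1/2$, the residual vortex $-\alpha\cdot(\text{mass})/2\pi\log|x-x_0|$ produces a singular weight $|x-x_0|^{2\gamma}$ with $\gamma>-1$ for the opposite sign only when the mass is small. A Pohozaev comparison then rules out opposite-sign mass. I expect this step to be the main obstacle: controlling a continuous family of exponents near $\alpha_\pm$ demands uniform Li-type estimates for the family $e^{\alpha v_k}$ rather than for finitely many species.

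Once separation holds, each blowup point is one-sided, and the one-sided analysis of Theorem \ref{thm2} applies locally. It gives that the concentrated mass equals $8\pi P(K)/(\int_K\alpha P)^2$ for the set $K$ of concentrating intensities. This is at least $\overline{\lambda}$ by (\ref{3}). Since the total mass on each side is at most $\overline{\lambda}$ times $P(I_\pm)$-weights, the inequality must be an equality at a single point $x_0$, with all mass concentrated and $K$ a minimizer in (\ref{3}). Hence $\mathcal S$ is a single point, and the residual part of $\mu^\pm$ vanishes.

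Finally, we show $J_{\lambda_k}(v_k)$ is bounded below, following \cite{ors10} Theorem 7.1. Split $v_k$ into its Green's-function part near $x_0$ and a bounded remainder. Evaluate $\frac12\|\nabla v_k\|_2^2$ and $\lambda_k\int\log\int e^{\alpha v_k}P(d\alpha)$ asymptotically via the profile of the entire solution of the limiting Liouville equation with weighted exponents $\alpha\in K$. The leading $\log$-divergent terms cancel exactly because of the quantization identity, which is the equality case in (\ref{3}). This leaves a finite limit and hence $j_{\overline{\lambda}}=\lim j_{\lambda_k}>-\infty$, where the equality uses the monotonicity and upper semicontinuity of $\lambda\mapsto j_\lambda$.
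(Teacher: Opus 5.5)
Your outline has a genuine gap at the two places where the two-sided case differs from the one-sided one.

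\textbf{Separation and the single-point claim.} You support ${\cal S}_+\cap{\cal S}_-=\emptyset$ only by a heuristic (singular weights, a ``Pohozaev comparison''), which you yourself flag as the main obstacle. No new local analysis is needed: it is mass counting from (\ref{5}), (\ref{6}) and (\ref{3}).
\begin{itemize}
\item At $x_0\in{\cal S}_+\cap{\cal S}_-$ one has $n^\pm(x_0)\ge4\pi$. The relation $8\pi(n^++n^-)\le 8\pi(m^++m^-)=(n^+-n^-)^2$ then forces the larger of $n^\pm(x_0)$ to be at least $(8+4\sqrt5)\pi>16\pi$.
\item Testing the weak convergence against $|\alpha|$ gives $n^\pm(x_0)\le\overline{\lambda}\int_{I_\pm}|\alpha|P(d\alpha)$. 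By (\ref{3}) with $K_\pm=I_\pm$, $\overline{\lambda}\le 8\pi P(I_\pm)/(\int_{I_\pm}|\alpha|P(d\alpha))^2$. Hence $n^\pm(x_0)\le 8\pi P(I_\pm)/\int_{I_\pm}|\alpha|P(d\alpha)<16\pi$, because $\alpha_\pm>\frac12$. This contradicts the first point.
\end{itemize}
After separation, your claim that ${\cal S}$ is a single point with both residual parts vanishing is false. The same counting only gives $\sharp{\cal S}_\pm\le 1$. It gives $s^\pm=0$ only on a side that actually blows up, via $n^\pm\ge 8\pi$ and $\alpha_\pm>\frac12$ as in Proposition \ref{pro3}. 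If, say, ${\cal S}_-=\emptyset$, the negative part of $\mu_k$ has no atoms in the limit. Its whole mass $\overline{\lambda}P(I_-)>0$ is then residual, so $s^-\not\equiv 0$. Nothing excludes ${\cal S}_+=\{x_+\}$, ${\cal S}_-=\{x_-\}$ with $x_+\neq x_-$; for symmetric $P$ both $K_+=I_+$ and $K_-=I_-$ attain (\ref{3}). This is in fact the main case.

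\textbf{The energy estimate.} Your final step is therefore aimed at the wrong configuration, and it also hides the real difficulty. For continuously distributed $P$, the sharp expansion near a blowup point is the Y.Y. Li type estimate (\ref{9})--(\ref{14.2}). The quantization identity (\ref{6}) does not give it by itself: one also needs residual vanishing, the mass identity (\ref{7.1}), and $\alpha_+>4\pi/\beta_0$. The discrete-measure theorem you cite does not supply it either.

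The missing idea is to reduce to Theorem \ref{thm2} rather than redo that analysis.
\begin{itemize}
\item Let $v_k^\pm\in E$ solve $-\Delta v_k^\pm=f_k^\pm-\frac{1}{|\Omega|}\int_\Omega f_k^\pm$, where $f_k^\pm=\lambda_k\int_{I_\pm}|\alpha|\frac{e^{\alpha v_k}}{\int_\Omega e^{\alpha v_k}}P(d\alpha)\ge 0$. Then $v_k=v_k^+-v_k^-$, and $v_k^\pm\ge -C$ because $G$ is bounded below.
\item Write $P=\tau_+P_++\tau_-P_-$. These bounds give $J_{\overline{\lambda}}(v_k)\ge A_k^++A_k^--\int_\Omega\nabla v_k^+\cdot\nabla v_k^- -C$. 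Here $A_k^\pm$ is the one-sided functional of $v_k^\pm$ for $P_\pm$ (reflected to $I_+$ on the negative side) at parameter $\tau_\pm\overline{\lambda}$.
\item Since $\tau_\pm\overline{\lambda}$ does not exceed the extremal value of $P_\pm$, and the supports stay beyond $\frac12$, Theorem \ref{thm2} bounds $A_k^\pm$ from below.
\item The cross term stays bounded precisely because $x_+\neq x_-$. Split it with a partition of unity $\rho_++\rho_-=1$ separating $x_\pm$ and integrate by parts. Then use $f_k^\pm\rightharpoonup n^\pm(x_\pm)\delta_{x_\pm}$ together with the smooth convergence of $v_k^\mp$ to $n^\mp G(\cdot,x_\mp)$ away from $x_\mp$.
\end{itemize}
The cases where only one side blows up are easier.
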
 

For the proof of these theorems we use the following facts. 

\begin{prop}\label{pro1}
If $\lambda<\overline{\lambda}$, the value $j_\lambda$ in (\ref{j1}) is attained. 
\end{prop}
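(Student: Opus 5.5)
The plan is to use the direct method of the calculus of variations on $E$. The key ingredient is a coercivity estimate for $\lambda<\overline{\lambda}$: I will show that for any $\lambda<\lambda'<\overline{\lambda}$,
\[
J_\lambda(v)\ge \frac{1}{2}\Bigl(1-\frac{\lambda}{\lambda'}\Bigr)\Vert\nabla v\Vert_2^2+\frac{\lambda}{\lambda'}\,j_{\lambda'} .
\]
This follows from the identity
\[
J_\lambda(v)=\frac{\lambda}{\lambda'}J_{\lambda'}(v)+\frac12\Bigl(1-\frac{\lambda}{\lambda'}\Bigr)\Vert\nabla v\Vert_2^2 ,
\]
which is checked directly from (\ref{1}) because the logarithmic term enters linearly in $\lambda$. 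Since $\lambda'<\overline{\lambda}$, the characterization (\ref{3}) quoted from \cite{rs14} gives $j_{\lambda'}>-\infty$. Hence $J_\lambda$ is bounded below and coercive on $E$, where $\Vert\nabla\cdot\Vert_2$ is an equivalent norm by the Poincaré inequality.

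Next I take a minimizing sequence $v_k\in E$ with $J_\lambda(v_k)\to j_\lambda$. By coercivity, $(v_k)$ is bounded in $H^1(\Omega)$. Passing to a subsequence, I get $v_k\rightharpoonup v$ weakly in $H^1$ and $v_k\to v$ strongly in $L^2$; the weak limit stays in $E$ because the mean-zero condition is preserved. The Dirichlet term is weakly lower semicontinuous. So it remains to show that
\[
\int_I\Bigl(\log\int_\Omega e^{\alpha v_k}\Bigr)P(d\alpha)\to\int_I\Bigl(\log\int_\Omega e^{\alpha v}\Bigr)P(d\alpha).
\]

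This convergence is the main technical step. I handle it with the standard Trudinger--Moser inequality on the compact surface: $\int_\Omega e^{p|w|}\le C\exp\bigl(\tfrac{p^2}{16\pi}\Vert\nabla w\Vert_2^2\bigr)$ for $w\in E$ and all $p>0$. Since $|\alpha|\le1$, this gives uniform bounds on $\int_\Omega e^{2|\alpha|| v_k|}$. Combining $|e^a-e^b|\le|a-b|(e^a+e^b)$ with Hölder's inequality yields
\[
\Bigl|\int_\Omega e^{\alpha v_k}-\int_\Omega e^{\alpha v}\Bigr|\le C\Vert v_k-v\Vert_2\to0
\]
uniformly in $\alpha\in I$.

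Jensen's inequality together with $\int_\Omega v_k=0$ gives the lower bound $\int_\Omega e^{\alpha v_k}\ge|\Omega|$. This makes the logarithm uniformly Lipschitz on the relevant range, so the $\alpha$-integrands converge uniformly and the $P$-integral converges; dominated convergence would also suffice. Therefore $J_\lambda(v)\le\liminf_k J_\lambda(v_k)=j_\lambda$, so $v$ attains $j_\lambda$, which proves Proposition \ref{pro1}.

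The only delicate point is the strict inequality $\lambda<\overline{\lambda}$. It is used exactly once, to absorb a fraction of the Dirichlet energy via the interpolation with $\lambda'$. At $\lambda=\overline{\lambda}$ this step breaks down, which is why Theorems \ref{thm2} and \ref{thm3} require a separate blow-up analysis.
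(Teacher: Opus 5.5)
Your proposal is correct and follows essentially the same route as the paper. Both arguments use the interpolation identity with an intermediate $\lambda'\in(\lambda,\overline{\lambda})$ to bound a minimizing sequence in $E$, weak compactness, and the Trudinger--Moser--Fontana inequality. The Jensen lower bound $\int_\Omega e^{\alpha v_k}\ge|\Omega|$ then lets you pass to the limit in the logarithmic term. Your explicit H\"older estimate, uniform in $\alpha\in I$, fills in the convergence $\int_\Omega e^{\alpha v_k}\to\int_\Omega e^{\alpha v}$ that the paper only cites.
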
 
We have, therefore, a sequence $(v_k, \lambda_k)$, $k=1,2,\cdots$, satisfying 
\begin{equation} 
\lambda_k\uparrow \overline{\lambda}, \ v_k\in E, \ J_{\lambda_k}(v_k)=j_{\lambda_k}.  
 \label{3.7}
\end{equation} 

\begin{prop}\label{pro2}
If 
\begin{equation} 
\Vert v_k\Vert_\infty\leq C, 
 \label{3.6}
\end{equation} 
it holds that $j_{\overline{\lambda}}>-\infty$ and this $j_{\overline{\lambda}}$ is attained. 
\end{prop}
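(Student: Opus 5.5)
Under (\ref{3.6}), I will show two things: that the sequence $(v_k)$ is compact in $E$, and that its limit attains $j_{\overline{\lambda}}$.

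\textbf{Step 1: uniform bounds.} Each $v_k$ is a minimizer, so it satisfies (\ref{2}) with $\lambda=\lambda_k$. Since $|\alpha|\le 1$ and $\Vert v_k\Vert_\infty\le C$, we have $e^{-C}\le e^{\alpha v_k}\le e^{C}$. Hence $\int_\Omega e^{\alpha v_k}$ lies between $|\Omega|e^{-C}$ and $|\Omega|e^{C}$. The right-hand side of (\ref{2}) is therefore bounded in $L^\infty(\Omega)$, uniformly in $k$.

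Elliptic regularity on the compact surface then gives bounds on $v_k$ in $W^{2,p}$ for every $p<\infty$, hence in $C^{1,\gamma}$. Alternatively, testing (\ref{2}) with $v_k$ already bounds $\Vert\nabla v_k\Vert_2^2$. Passing to a subsequence, $v_k\to v$ weakly in $H^1$ and uniformly on $\Omega$. The limit satisfies $v\in E$ and $\Vert v\Vert_\infty\le C$.

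\textbf{Step 2: passage to the limit.} Uniform convergence gives $\log\int_\Omega e^{\alpha v_k}\to\log\int_\Omega e^{\alpha v}$, uniformly in $\alpha\in I$. Integrating against $P(d\alpha)$ and using $\lambda_k\to\overline{\lambda}$, the nonlinear term of $J_{\lambda_k}(v_k)$ converges to that of $J_{\overline{\lambda}}(v)$. Weak lower semicontinuity of the Dirichlet norm then gives
\[ J_{\overline{\lambda}}(v)\le\liminf_k J_{\lambda_k}(v_k)=\liminf_k j_{\lambda_k}. \]
(The elliptic estimates actually give strong convergence, so this is in fact an equality.)

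\textbf{Step 3: comparison.} For any fixed $w\in E$, Jensen's inequality together with $\int_\Omega w=0$ gives $\log\int_\Omega e^{\alpha w}\ge\log|\Omega|$. Hence $\lambda\mapsto J_\lambda(w)$ is nonincreasing, up to the additive constant $\lambda\log|\Omega|$; equivalently, $J_\lambda(w)+\lambda\log|\Omega|$ is nonincreasing in $\lambda$. It follows that
\[ j_{\lambda_k}\le J_{\lambda_k}(w)\to J_{\overline{\lambda}}(w), \]
where continuity in $\lambda$ is clear for fixed $w$ because every term is finite. Therefore $\limsup_k j_{\lambda_k}\le j_{\overline{\lambda}}$.

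Combining this with Step 2 yields
\[ J_{\overline{\lambda}}(v)\le j_{\overline{\lambda}}\le J_{\overline{\lambda}}(v). \]
So $j_{\overline{\lambda}}=J_{\overline{\lambda}}(v)>-\infty$, and the infimum is attained at $v$.

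No step here is a real obstacle once (\ref{3.6}) is assumed. The only point needing care is the uniformity in $\alpha$ in Step 2, and the bound $|\alpha|\le1$ handles it. The genuine difficulty of the theorems lies in verifying (\ref{3.6}) via blow-up analysis, which is outside this proposition.
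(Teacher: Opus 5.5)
Your proof is correct and follows the paper's route: an $L^\infty$ bound on the right-hand side of the Euler equation, elliptic regularity to extract a uniformly convergent subsequence, and passage to the limit in $J_{\lambda_k}(v_k)$. Your Step 3 spells out the upper semicontinuity $\limsup_k j_{\lambda_k}\le j_{\overline{\lambda}}$. The paper relies on this only implicitly, through its remark in the introduction, when it writes $J_{\overline{\lambda}}(v)=j_{\overline{\lambda}}$.
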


In the other case of (\ref{3.6}), that is, 
\begin{equation} 
\Vert v_k\Vert_\infty\rightarrow+\infty. 
 \label{111}
\end{equation} 
we derive a local uniform estimate of Y.Y. Li type \cite{yyli99} up to a subsequence. This estimate follows from the asymptotic behavior of $\{ v_k\}$, called {\it  residual vanishing} and {\it mass identity}. Both of them arise in the context of blowup analysis, and implies   
\begin{equation} 
j_{\lambda_k}=J_{\lambda_k}(v_k)\geq -C. 
 \label{bounded}
\end{equation} 
Inequality (\ref{bounded}) ensures (\ref{3.5}) because the mapping 
\[ \lambda\mapsto j_\lambda=\inf_EJ_\lambda\in [-\infty,+\infty) \] 
is upper semi-continuous.: 
\[ -C\leq \limsup_{k\rightarrow\infty}j_{\lambda_k}\leq j_{\overline{\lambda}}, \quad \limsup_{k\rightarrow\infty}\lambda_k=\overline{\lambda}. \] 

Here we explain the notions of residual vanishing and mass identity in short, under the context of the proof of Theorem \ref{thm2}. Actually in \cite{stz14} it is asserted that the residual vanishing alone implies (\ref{bounded}), which, however, is insufficient, and the other independent property, mass identity, is required to complete the proof. 

To this end we observe that $(v_k, \lambda_k)$, $k=1,2,\cdots$, in (\ref{3.7}) is a solution to (\ref{2}) for $(v, \lambda)=(v_k, \lambda_k)$: 
\begin{equation} 
-\Delta v_k=\lambda_k\int_I\alpha\left( \frac{e^{\alpha v_k}}{\int_\Omega e^{\alpha v_k}}-\frac{1}{\vert \Omega\vert}\right) P(d\alpha), \ \int_\Omega v_k=0.  
 \label{2.1}
\end{equation}  
If (\ref{111}) holds, therefore, blowup analysis of \cite{ors10} is applicable. Up to a subsequence there arises the convergence in ${\cal M}(\Omega\times I)=C(\Omega\times I)'$,   
\begin{equation} 
\lambda_k\frac{e^{\alpha v_k}}{\int_\Omega e^{\alpha v_k}}dxP(d\alpha)\rightharpoonup \left[ \sum_{x_0\in {\cal S}}m(x_0,\alpha)\delta_{x_0}(dx)+r(x,\alpha)dx\right]P(d\alpha), 
 \label{4}
\end{equation}  
where 
\[ 0\leq m(x_0, \cdot)\in L^\infty(I,P), \ 0\leq r\in L^1(\Omega\times I) \] 
and 
\begin{equation} 
{\cal S}={\cal S}_+\cup {\cal S}_-, \ {\cal S}_\pm=\{ x_0\in\Omega \mid \exists x_k\rightarrow x_0, \ \pm v_k(x_k)\rightarrow +\infty\}. 
 \label{5.5}
\end{equation} 
Since the Green function to 
\begin{equation} 
-\Delta v=f-\frac{1}{\vert \Omega\vert}\int_\Omega f, \ \int_\Omega v=0 
 \label{x10}
\end{equation} 
denoted by $G=G(x,x')$ is bounded below (\cite{aubin}),  
we have  
\begin{equation} 
\mbox{supp} \ P\subset I_\pm \ \Rightarrow \ {\cal S}_\mp=\emptyset.  
 \label{sup+}
\end{equation} 
It holds also that $\sharp {\cal S}<+\infty$ and 
\begin{eqnarray} 
& & n^\pm(x_0)\equiv \int_{I_\pm}\vert \alpha\vert m(x_0,\alpha)P(d\alpha)\geq 4\pi, \ x_0 \in {\cal S}_\pm, \ \label{5} \\ 
& & 8\pi\int_Im(x_0,\alpha)P(d\alpha)=\left( \int_I\alpha m(x_0,\alpha)P(d\alpha)\right)^2, \ x_0 \in {\cal S}. \ \label{6}
\end{eqnarray} 
Then the residual vanishing described above means $s=0$, where $s=s^++s^-$ for  
\begin{equation} 
s^\pm(x)= \int_{I_\pm}r(x,\alpha)P(d\alpha). 
 \label{rv}
\end{equation} 

In accordance with the residual vanishing, we notice the following properties.

\begin{prop}\label{pro3}
If (\ref{9.2}) holds, the residual vanishing $s^+=0$ occurs. 
\end{prop}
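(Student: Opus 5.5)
The plan is to argue by contradiction, combining the mass-quantization facts (\ref{5}), (\ref{6}) with the extremality of $\overline{\lambda}$ in (\ref{3}). Under (\ref{9.2}), (\ref{sup+}) gives ${\cal S}_-=\emptyset$. Moreover $\alpha v_k\le v_k$ for $\alpha\in[\alpha_+,1]$ whenever $v_k\ge 0$. Since the Green function is bounded below, we also have $v_k\ge -C$. Together these show that blowup of each $e^{\alpha v_k}/\int_\Omega e^{\alpha v_k}$ occurs only at ${\cal S}={\cal S}_+$, where $\sharp{\cal S}<+\infty$.

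The first step is to write the limit of $v_k$ away from ${\cal S}$. Off ${\cal S}$, $v_k$ converges locally uniformly (or in $W^{1,q}$) to
\[ v_0=\sum_{x_0\in{\cal S}}n(x_0)G(\cdot,x_0)+\int_\Omega G(\cdot,x')\Big(\int_I\alpha r(x',\alpha)P(d\alpha)\Big)dx'-\mbox{const}, \]
where $n(x_0)=\int_I\alpha m(x_0,\alpha)P(d\alpha)=n^+(x_0)\ge4\pi$. The residual has the form $r(x,\alpha)=\overline{\lambda}\,e^{\alpha v_0}/\lim_k\int_\Omega e^{\alpha v_k}$ whenever the denominators stay bounded. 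Near $x_0$ one has $G(x,x_0)\sim\frac{1}{2\pi}\log\frac{1}{|x-x_0|}$, so $e^{\alpha v_0}\gtrsim |x-x_0|^{-\alpha n(x_0)/2\pi}$.

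The key point is that $\alpha n(x_0)/2\pi\ge 4\pi\alpha_+/2\pi=2\alpha_+>1$ by (\ref{9.2}), and $1$ is exactly the threshold for the left-hand side to be at least $1$. I would aim to show that the constant denominators $\int_\Omega e^{\alpha v_k}$ must diverge for $P$-a.e. $\alpha\in I_+$. The argument is as follows. If $\liminf_k\int_\Omega e^{\alpha v_k}<\infty$ for $\alpha$ in a set of positive $P$-measure, then Fatou's lemma gives $\int_\Omega e^{\alpha v_0}<\infty$. But $e^{\alpha v_0}\ge c|x-x_0|^{-2\alpha_+}$ near $x_0$, which is not integrable in two dimensions since $2\alpha_+>2\cdot\frac12$; wait, integrability requires exponent $<2$, so I instead use the sharper information $n(x_0)\ge 8\pi\cdot(\ldots)$. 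Concretely, (\ref{6}) together with $P$-support in $[\alpha_+,1]$ gives
\[ n(x_0)^2=8\pi\int m\,dP\ge 8\pi\, n(x_0), \]
because $\alpha\le1$ implies $\int \alpha m\,dP\le\int m\,dP$. Hence $n(x_0)\ge 8\pi$ and $\alpha n(x_0)/2\pi\ge 4\alpha_+>2$, which makes $e^{\alpha v_0}$ non-integrable near $x_0$. This forces $\int_\Omega e^{\alpha v_k}\to\infty$ for every $\alpha\ge\alpha_+$, and therefore $r(\cdot,\alpha)=\lim \lambda_k e^{\alpha v_k}/\int e^{\alpha v_k}=0$ off ${\cal S}$ (local uniform boundedness of $v_k$ off ${\cal S}$ kills the numerator's contribution). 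Integrating over $I_+$ gives $s^+=0$.

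The main obstacle is making the lower bound on $v_0$ rigorous: the residual term and $\lim_k$ of $v_k$ off ${\cal S}$ must be controlled with $v_k\ge -C$ alone. I would handle this by Fatou applied to $\liminf_k (v_k-\min)$ together with the Green representation, using $G\ge -C$ to drop the residual and the other points' contributions from below. A second delicate point is the case ${\cal S}=\emptyset$ under (\ref{111}), which is excluded because $v_k\ge -C$ and $\|v_k\|_\infty\to\infty$ force $\max v_k\to+\infty$, producing a point of ${\cal S}_+$.
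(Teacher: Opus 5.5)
Your proposal is correct and follows the paper's proof. You upgrade $n^+(x_0)\ge 4\pi$ to $n^+(x_0)\ge 8\pi$ using (\ref{6}) and $\alpha\le 1$, bound the limit of $v_k$ below by $\frac{n^+(x_0)}{2\pi}\log\frac{1}{d(x,x_0)}-C$ using $G\ge -C$, and conclude from $4\alpha_+>2$ that $\int_\Omega e^{\alpha v_k}\to+\infty$ for every $\alpha\ge\alpha_+$, which makes the residual vanish off ${\cal S}$. As you noticed yourself, the bound $n^+(x_0)\ge 4\pi$ alone only gives exponent $2\alpha_+$, which is not enough, so the $8\pi$ bound is the essential step, exactly as in the paper.
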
 

\begin{prop}[\cite{sz13}]\label{pro4}
If $\mbox{supp} \ P\subset I_+$ and $s^+=0$ in (\ref{3.7}) with (\ref{111}), we have the following facts: 
\begin{enumerate} 
\item It holds that $\sharp {\cal S}=1$. 
\item The extremal value $\overline{\lambda}$ in (\ref{3}) is attained by $K_+=I_+$. 
\item Putting ${\cal S}=\{x_0\}$, we have  
\[ m(\alpha,x_0)=\overline{\lambda}, \ \mbox{$P(d\alpha)$-a.e.}. \]
\end{enumerate} 
\end{prop}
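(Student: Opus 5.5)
The plan is to combine three pieces of information: the total mass identity coming from (\ref{4}), the quadratic relation (\ref{6}) at each blowup point, and a ``function version'' of the variational characterization (\ref{3}) of $\overline{\lambda}$. Throughout, $\mbox{supp}\,P\subset I_+$, so $\alpha\ge 0$ $P$-a.e. and ${\cal S}={\cal S}_+$ by (\ref{sup+}).

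\emph{Step 1 (mass for each $\alpha$).} For each $\alpha$ we have $\int_\Omega\lambda_k e^{\alpha v_k}/\int_\Omega e^{\alpha v_k}\,dx=\lambda_k$. I will test (\ref{4}) against $\psi(x,\alpha)=\psi(\alpha)\in C(I)$, which gives
\[ \overline{\lambda}\int_I\psi\,P(d\alpha)=\int_I\psi(\alpha)\Bigl(\sum_{x_0\in{\cal S}}m(x_0,\alpha)+\int_\Omega r(x,\alpha)\,dx\Bigr)P(d\alpha). \]
Since $s^+=\int_{I_+}r\,P(d\alpha)=0$ and $r\ge0$, we get $r=0$ a.e. with respect to $dx\,P(d\alpha)$. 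Hence
\[ \sum_{x_0\in{\cal S}}m(x_0,\alpha)=\overline{\lambda}\quad P\mbox{-a.e.} \]
In particular $0\le m(x_0,\cdot)\le\overline{\lambda}$. Set $M(x_0)=\int_I m(x_0,\alpha)P(d\alpha)$ and $N(x_0)=\int_I\alpha\, m(x_0,\alpha)P(d\alpha)$. By (\ref{5}), $N(x_0)\ge4\pi$, so $M(x_0)>0$.

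\emph{Step 2 (extremal inequality for functions).} I claim that for every measurable $g$ with $0\le g\le1$,
\[ \Bigl(\int_I\alpha g\,P(d\alpha)\Bigr)^2\le\frac{8\pi}{\overline{\lambda}}\int_I g\,P(d\alpha). \]
For indicator functions this is exactly (\ref{3}). For general $g$, I would use the layer-cake formula $g=\int_0^1 1_{K_t}\,dt$ with $K_t=\{g>t\}$. Then
\[ \int_I\alpha g\,P(d\alpha)=\int_0^1\int_{K_t}\alpha\,P(d\alpha)\,dt\le\int_0^1\sqrt{8\pi P(K_t)/\overline{\lambda}}\,dt\le\sqrt{\frac{8\pi}{\overline{\lambda}}\int_0^1P(K_t)\,dt}, \]
where the last step is Cauchy--Schwarz in $t$. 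The right-hand side equals $\sqrt{(8\pi/\overline{\lambda})\int_I g\,P(d\alpha)}$, which proves the claim. This is the step I expect to need the most care: the set-wise infimum in (\ref{3}) must pass to fractional densities. Note that (\ref{3}) requires $K_+\subset\mbox{supp}\,P$, which is harmless up to $P$-null sets.

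\emph{Step 3 (one point).} Applying Step 2 with $g\equiv1$, i.e. $K_+=I_+$, gives $\overline{\lambda}^2\bigl(\int_I\alpha P\bigr)^2\le8\pi\overline{\lambda}$. Integrating the identity of Step 1 against $\alpha P(d\alpha)$ gives $\sum_{x_0}N(x_0)=\overline{\lambda}\int_I\alpha P$, and (\ref{6}) gives $N(x_0)=\sqrt{8\pi M(x_0)}$. Combining these,
\[ 8\pi\Bigl(\sum_{x_0}\sqrt{M(x_0)}\Bigr)^2=\overline{\lambda}^2\Bigl(\int_I\alpha P\Bigr)^2\le8\pi\overline{\lambda}=8\pi\sum_{x_0}M(x_0). \]
Since every $M(x_0)>0$, the inequality $(\sum\sqrt{M})^2>\sum M$ holds strictly as soon as $\sharp{\cal S}\ge2$. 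This is impossible, and ${\cal S}\ne\emptyset$ by (\ref{111}), so $\sharp{\cal S}=1$.

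\emph{Step 4 (conclusions 2 and 3).} With ${\cal S}=\{x_0\}$, Step 1 gives $m(x_0,\alpha)=\overline{\lambda}$ $P$-a.e., which is item 3. Then (\ref{6}) reads $8\pi\overline{\lambda}=\overline{\lambda}^2\bigl(\int_{I_+}\alpha P\bigr)^2$, that is,
\[ \overline{\lambda}=\frac{8\pi P(I_+)}{\bigl(\int_{I_+}\alpha P\bigr)^2}, \]
so the infimum in (\ref{3}) is attained by $K_+=I_+$, which is item 2.
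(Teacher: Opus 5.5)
Your proof is correct and is essentially the paper's argument. You test (\ref{4}) against functions of $\alpha$ alone to get $\sum_{x_0}m(x_0,\cdot)=\overline{\lambda}$ $P$-a.e., then combine $\sum_{x_0}N(x_0)=\overline{\lambda}\int_{I_+}\alpha P(d\alpha)$ with (\ref{6}) and the bound $\overline{\lambda}\bigl(\int_{I_+}\alpha P(d\alpha)\bigr)^2\le 8\pi$ from (\ref{3}) at $K_+=I_+$. This is exactly the chain (\ref{14.1}), where equality forces a single blowup point, so the only real difference is that your layer-cake Step 2 is superfluous: you only use it for $g\equiv 1$, which is (\ref{3}) itself.
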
 

Mass identity, on the other hand, is associated with the scaling transformation. We continue to suppose $\mbox{supp} \ P\subset I_+$ and (\ref{111}) to $(v_k, \lambda_k)$, $k=1,2,\cdots$, defined by (\ref{3.7}). There is $x_k\in \Omega$ such that 
\begin{equation} 
\Vert v_k\Vert_\infty=v_k(x_k)\rightarrow +\infty 
 \label{12.2}
\end{equation}  
by (\ref{sup+}). Passing to a subsequence, then, we have $x_0\in \Omega$ such that
\begin{equation} 
x_k\rightarrow x_0\in {\cal S}={\cal S}_+. 
 \label{12.4}
\end{equation}

Here we have the following facts \cite{st18}. First, it holds that  
\begin{equation} 
\sigma_k=e^{-w_k(0)/2}\rightarrow 0 
 \label{12.3}
\end{equation} 
for 
\[ w_k(x)=v_k(x+x_k)-\int_\Omega e^{v_k}. \]
Through the iso-thermal coordinate, second, we introduce the transformation 
\begin{equation} 
x'\in \Omega \ \mapsto \ x=\sigma_k^{-1}(x'-x_k) 
 \label{15.1}
\end{equation} 
in a neighbourhood of $x_k$ in $\Omega$. 

More precisely, we use the local chart 
\[ X':x'\in B_r(x_k)\subset \Omega \ \rightarrow \ {\bf R}^2, \ X'(x_k)=0 \] 
for $0<r\ll 1$ with the induced metric 
\[ g(X')=e^{\xi (X')}(dX_1'^2+dX_2'^2), \ \xi(0)=0. \] 
Then we take a ball with center at the origin in ${\bf R}^2$, denoted by $\tilde B_R\subset X'(B_r(x_k))$ for $0<R\ll 1$.   We put  
\[ X=\sigma_k^{-1}X'\in \tilde B_{\sigma_k^{-1}R},  \ X'\in \tilde B_R \] 
to provide the metric 
\begin{equation}  
\tilde g_k(X)=e^{\xi (X')}(dX_1^2+dX_2^2)  
 \label{pb}
\end{equation}  
on $\tilde B_{\sigma_k^{-1}R}$.  As $k\rightarrow \infty$, this $\tilde g_k$ converges to the standard metric on ${\bf R}^2$, locally uniformly.  
This $X$ is identified with $x$ in (\ref{15.1}) if $\Omega$ is flat, and henceforth, we write (\ref{15.1}) for the transformation $x'\in\Omega\mapsto X\in {\bf R}^2$ in short.

Now we define $\tilde f_k=\tilde f_k(x)$, $k=1,2,\cdots$, by 
\begin{equation} 
\tilde f_k(x)=\sigma_k^2f_k(\sigma_kx+x_k), \ x\in \tilde B_{\sigma_k^{-1}R}, \ 0<R\ll 1 
  \label{scale2}
\end{equation} 
for 
\begin{equation} 
f_k(x)=\lambda_k\int_{I_+}\alpha\frac{e^{\alpha v_k}}{\int_\Omega e^{\alpha v_k}}P(d\alpha). 
 \label{12.1}
\end{equation} 
It holds that (\ref{x10}) for $(v,f)=(v_k, f_k)$. 
A subsequence of $\tilde f_k$, $k=1,2,\cdots$, furthermore, converges to $\tilde f\not \equiv 0$ in $C^2_{loc}({\bf R}^2)$ as is confirmed in $\S$\ref{sec4}.

Under such a situation, mass identity means 
\begin{equation} 
n^+(x_0)=\int_{{\bf R}^2}\tilde f\equiv \beta_0. 
 \label{7.1}
\end{equation} 
Equality (\ref{7.1}) indicates that the total mass of the scaling limit $\tilde f$ of the sequence $f_k$, $k=1,2,\cdots$, is equal to its collapse mass $n^+(x_0)$ in (\ref{5}). By this condition of (\ref{7.1}),  the tail of $w_k$ in the rescaled variable is controllable, which ensures the estimate 
\begin{equation} 
v_k(x)-v_k(x_k)=-\frac{\beta_k}{2\pi}\log\left( 1+\left( \frac{e^{v_k(x_k)}}{\int_\Omega e^{v_k}}\right)^{1/2}\vert x-x_k\vert \right)+O(1) 
 \label{9}
\end{equation} 
uniformly in $x\in B_R(x_0)$, $0<R\ll 1$, with 
\begin{equation} 
\beta_k\leq \beta_0, \ \beta_k\rightarrow \beta_0. 
 \label{14.2}
\end{equation}  

Inequality (\ref{9}) with (\ref{14.2}) is a weak form of the inequality obtained by \cite{yyli99} for $P=\delta_1$, and the above argument of controlling the tail of rescaled $v_k$ is due to \cite{lin07} .

Mass identity again follows from (\ref{9.2}), and then, (\ref{bounded}) is obtained by (\ref{9})-(\ref{14.2}), which concludes the proof of Theorem \ref{thm2}. 

As for Theorem \ref{thm3}, we reduce it to the case of one-sided measure treated in Theorem \ref{thm2}, using (\ref{6}).  It holds that ${\cal S}_+\cap {\cal S}_-=\emptyset$ and $\sharp {\cal S}_\pm \leq 1$ by (\ref{7x}), and then (\ref{3.5}) follows from the method of \cite{os06} developed for the case of $P=\frac{1}{2}(\delta_1+\delta_{-1})$,  where (\ref{2}) takes the form of 
\begin{equation} 
-\Delta v=\frac{\lambda}{2}\left( \frac{e^{v}}{\int_\Omega e^v}-\frac{e^{-v}}{\int_\Omega e^{-v}}\right), \ \int_\Omega v=0. 
 \label{neutral1}
\end{equation} 

Here we provide several remarks on the condition (\ref{9.2}). First, Theorem 4 of \cite{st19} says that the residual vanishing, the mass identity, and $P(\{1\})>0$ imply 
\[ 
\frac{8\pi}{\left( \int_{I_+}\alpha P(d\alpha)\right)^2}
\leq 8\pi P(\{1\})^{-1}. 
\] 
In fact, we have 
\begin{equation} 
P(\{1\})<\left( \int_{I_+}\alpha P(d\alpha)\right)^2 
 \label{9.1}
\end{equation} 
if (\ref{9.2}) and $P(\{1\})>0$ hold. To see this property, we put 
\[ P=\tau\delta_1+(1-\tau)P_0, \ 0<\tau<1, \] 
where $P_0=P_0(d\alpha)$ is a probability measure on $(\frac{1}{2},1)$. Then it holds that 
\begin{eqnarray*} 
 \left( \int_{I_+}\alpha P(d\alpha)\right)^2 & = & \left( \tau+(1-\tau)\int_{I_+}\alpha P(d\alpha)\right)^2 \\ 
& \geq & \left( 
\tau+\frac{1}{2}(1-\tau)\right)^2 =\frac{1}{4}(1+\tau)^2>\tau=P(\{1\}) 
\end{eqnarray*} 
and hence (\ref{9.1}). 

Second, the condition (\ref{9.2}) is almost optimal for residual vanishing. To see this property, we take 
\begin{equation} 
P=\tau\delta_1+(1-\tau)\delta_\gamma, \ 0<\tau<1, \ 0<\gamma<1. 
 \label{9.3}
\end{equation} 
The the residual vanishing implies  
\[ 
\frac{8\pi}{\left(\int_{I_+}\alpha P(d\alpha)\right)^2}
\leq \frac{8\pi P(K_+)}{\left( \int_{K_+}\alpha P(d\alpha)\right)^2} 
\] 
for $K=\{1\}$ by Proposition \ref{pro4}, which means  
\begin{equation} 
\gamma\geq \frac{\sqrt{\tau}}{1+\sqrt{\tau}}. 
 \label{tau}
\end{equation} 
Here we note that the right-hand side converges to $\frac{1}{2}$ as $\tau\uparrow 1$. 

Third, (\ref{9.2}) is optimal also for mass identity. In fact, Theorem 1.2 of \cite{st18} says that for $P(d\alpha)$ in the form of (\ref{9.2}), mass identity holds and fails for 
\[ \gamma>\frac{\sqrt{\tau}}{1+\sqrt{\tau}} \quad \mbox{and} \quad \gamma<\frac{\sqrt{\tau}}{1+\sqrt{\tau}},  \]
respectively. 

In spite of these optimalities for residual vanishing and mass identity, the condition $\alpha_+>\frac{1}{2}$ of (\ref{9.2}) is not essential for (\ref{3.5}) to hold. Similarly to the case of discrete $P(d\alpha)$, there may arise this boundedness for any continuously distributed $P$. 

We conclude this section with several known facts concerning the other functionals. The functional $J_\lambda$ in (\ref{1}) is associated with deterministic intensities, and for the moment we put 
\[ J_\lambda=J_\lambda^d, \quad j_\lambda=j_\lambda^d, \quad \overline{\lambda}=\overline{\lambda}_d, \] 
to distinguish it with the other functionals. 

First, if the intensities of point vortices are independent random variables, subject to the same distribution $P(d\alpha)$ on $I=[-1,1]$, their mean field equation is described by    
\begin{equation} 
-\Delta v=\frac{\lambda}{\iint_{\Omega\times I}e^{\alpha v}}\int_I\alpha\left( e^{\alpha v} -\frac{1}{\vert \Omega\vert}\int_\Omega e^{\alpha v}\right)P(d\alpha), \ \int_\Omega v=0 
 \label{x11}
\end{equation} 
(\cite{ner04}). We thus obtain 
\[ -\Delta v=\frac{\lambda}{\int_\Omega e^v+e^{-v} \ dx}\left( e^v-e^{-v}\right), \ \int_\Omega v=0 \]
for $P=\frac{1}{2}(\delta_1+\delta_{-1})$, differently from (\ref{neutral1}). The associated variational functional with (\ref{x11}) is 
\[ J^s_\lambda(v)=\frac{1}{2}\Vert \nabla v\Vert_2^2-\lambda\log\left( \iint_{\Omega\times I}e^{\alpha v}\right)P(d\alpha)dx, \ v\in E. \] 
Letting  
\[ j_\lambda^s=\inf_EJ^s_\lambda, \]
we obtain 
\[ \sup\{ \lambda \mid j^s_\lambda>-\infty\}=8\pi, \] 
provided that $1\in \mbox{supp} \ P$, and it holds that $j^s_{8\pi}>-\infty$ 
(\cite{rz12, sz13}). 

Analogous results hold when $\Omega\subset {\bf R}^2$ is a bounded domain with smooth boundary $\partial\Omega$. For the variational functionals 
\[ J_\lambda^{d0}(v)=\frac{1}{2}\Vert \nabla v\Vert_2^2-\lambda\int_I\left(\log \int_\Omega e^{\alpha v}dx\right) P(d\alpha), \ v\in H^1_0(\Omega) \] 
and 
\[ J_\lambda^{s0}(v)=\frac{1}{2}\Vert \nabla v\Vert_2^2-\lambda\log\left( \iint_{\Omega\times I}e^{\alpha v}dxP(d\alpha)\right), \ v\in H^1_0(\Omega), \] 
the extremal values for $j^{i0}_\lambda>-\infty$, denoted by $\overline{\lambda}_{i0}$, $i=d,s$, are given by $\overline{\lambda}_{d0}=\overline{\lambda}_d$ and $\overline{\lambda}_{s0}=8\pi$ (if $1\in \mbox{supp} \ P$), respectively, where 
\[ j_\lambda^{i0}=\inf_EJ_\lambda^{i0}. \] 
It holds also that $j_{8\pi}^{s0}>-\infty$. In fact, $\overline{\lambda}_{s0}=8\pi$ and $j^{s0}_{8\pi}>-\infty$ follow from the other Trudinger-Moser inequality 
\[ \int_\Omega e^{4\pi v^2}\leq C, \quad v\in H^1_0(\Omega), \ \Vert \nabla v\Vert_2=1 \] 
as in \cite{rz12}. 

As for $\overline{\lambda}_{d0}$, first, we take discrete $P(d\alpha)$. In this case the result \cite{sw05b} on logarithmic HLS inequality is applicable. By the duality argument we  have $j_{\lambda}^{d0}>-\infty$ and $j_\lambda^{d0}=-\infty$ for $\lambda<\overline{\lambda}_d$ and $\lambda>\overline{\lambda}_d$, respectively. It holds also that $j_{\overline{\lambda}_d}>-\infty$. 

Second, even if $P(d\alpha)$ is continuously distributed,  we obtain $j_\lambda>-\infty$ for $\lambda<\overline{\lambda}_d$, approximating it by discrete measures  as in \cite{rs14}. Then, analogous result to Theorem \ref{thm2} arises concerning $j_{\overline{\lambda}_d}>-\infty$, becasuse the exclusion of boundary blowup arises for the sequence of solutions to the associated Euler equation 
\[ -\Delta v=\lambda\int_I\alpha \frac{e^{\alpha v}}{\int_\Omega e^{\alpha v}} \ P(d\alpha), \ \left. v\right\vert_{\partial\Omega}=0. \] 
In fact, in this case of one-sided $P(d\alpha)$ the method of moving planes works similarly to the case of the single intensity \cite{ns90},  
see \cite{suz22} for example. It seems, however, that this exclusion of boundary blowup is not known if $P(d\alpha)$ is two-sided, in spite of \cite{rt16} concerning the Euler equation to $J_\lambda^{s0}$, 
\begin{equation} 
-\Delta v=\lambda\int_I\alpha \frac{e^{\alpha v}}{\iint_{\Omega\times I} e^{\alpha v}}P(d\alpha), \ \left. v\right\vert_{\partial\Omega}=0. 
 \label{se}
\end{equation}

This paper is composed of four sections. Propositions \ref{pro1}-\ref{pro4}, Theorem \ref{thm2}, and Theorem \ref{thm3} are proven in Section \ref{sec2}, Section \ref{sec4}, and Section \ref{sec5}, respectively.  

\section{Preliminaries}\label{sec2}

We begin with the following proof. 

\vspace{2mm} 

{\it Proof of Proposition \ref{pro1}:} \ Given $\lambda<\overline{\lambda}$, we have $v_k\in E$, $k=1,2,\cdots$, such that 
\[ \lim_{k\rightarrow \infty}J_\lambda(v_k)=j_\lambda>-\infty. \] 
For $\lambda<\lambda_1<\overline{\lambda}$, it holds that  
\begin{eqnarray*} 
j_\lambda & = & \frac{1}{2}\Vert \nabla v_k\Vert_2^2-\lambda\int_I\left( \log \int_\Omega e^{v_k}\right)P(d\alpha) \\ 
& = & \frac{\lambda}{\lambda_1}\left\{ J_{\lambda_1}(v_k)+\frac{1}{2}\left(1-\frac{\lambda}{\lambda_1}\right)\Vert \nabla v_k\Vert_2^2\right\} \\ 
& \geq & \frac{\lambda}{\lambda_1}j_{\lambda_1}+\frac{1}{2}\left(1-\frac{\lambda}{\lambda_1}\right)\Vert \nabla v_k\Vert_2^2 
\end{eqnarray*} 
and  hence 
\begin{equation} 
\Vert \nabla v_k\Vert_2\leq C. 
 \label{vbound}
\end{equation}  
Thus a subsequence takes the weak convergence 
\[ v_k\rightharpoonup v \ \mbox{in $E$}, \] 
and  hence 
\[ \liminf_{k\rightarrow\infty}\Vert \nabla v_k\Vert_2^2\geq \Vert \nabla v\Vert_2^2. \]  
Then it follows that  
\[ \lim_{k\rightarrow\infty}\int_\Omega e^{\alpha v_k}=\int_\Omega e^{\alpha v}, \ \forall \alpha \in I \]
from Trudinger-Moser-Fontana's inequality \cite{fon93}, 
\begin{equation} 
\int_\omega e^{4\pi v^2}\leq C, \quad v\in E, \ \Vert \nabla v\Vert_2\leq 1, 
 \label{fontana}
\end{equation} 
see \cite{st98} for example. It follows also that 
\[ \vert \Omega \vert \leq \int_\Omega e^{\alpha v_k}\leq \int_\Omega e^{\vert v_k\vert}\leq C \] 
from (\ref{vbound})-(\ref{fontana}), and therefore,  
\[ \lim_{k\rightarrow\infty}\int_I\left( \log \int_\Omega e^{\alpha v_k}\right)P(d\alpha)=\int_I\left( \log \int_\Omega e^{\alpha v}\right)P(d\alpha) \] 
by the dominated convergence theorem. We thus obtain 
\[ j_\lambda=\lim_{k\rightarrow \infty}J_\lambda(v_k)\geq J_\lambda(v)\geq j_\lambda \] 
and hence $j_\lambda=J_\lambda(v)$. \hfill $\Box$

\vspace{2mm} 
 
Now we show the following proof.  

\vspace{2mm} 

{\it Proof of Proposition \ref{pro2}:} \ The result follows from the standard elliptic regularity. In fact we have  
\begin{equation} 
-\Delta v_k=f_k-\frac{1}{\vert \Omega\vert}\int_\Omega f_k, \ \int_\Omega v_k=0 
 \label{18.1}
\end{equation}  
for $f_k=f_k(x)$ defind by (\ref{12.1}). Then, by  
\[ \Vert f_k\Vert_\infty\leq \frac{\lambda_k}{\vert \Omega\vert}e^{\Vert v_k\Vert_\infty}\leq C \] 
the $L^p$ estimate and Morrey's inequality ensure 
\[ \Vert v_k\Vert_{C^{1,\theta}}\leq C, \ 0<\theta<1 \] 
and hence $\Vert f_k\Vert_{C^{1,\theta}}\leq C$. Then we obtain 
\[ \Vert v_k\Vert_{C^{2,\theta}}\leq C \] 
by the Schauder estimate. A subsequence satisfies $v_k\rightarrow v$ in $C^2$, and therefore, 
\[ \lim_{k\rightarrow \infty}J_{\lambda_k}(v_k)=J_{\overline{\lambda}}(v)=j_{\overline{\lambda}}.  \] 
\hfill $\Box$

\vspace{2mm} 

We turn to the following proof. 

\vspace{2mm} 

{\it Proof of Proposition \ref{pro3}:} \ We assume $\mbox{supp} \ P \subset I_+$. By (\ref{sup+}) it holds that 
\begin{eqnarray*} 
m^+(x_0) & \equiv& \int_{I_+}m(x_0, \alpha)P(d\alpha) \\ 
& \geq & n^+(x_0)=\int_{I_+}\alpha m(x_0, \alpha)P(d\alpha)\geq 4\pi 
\end{eqnarray*} 
for $x_0\in {\cal S}={\cal S}_+$. Then equality (\ref{6}) implies 
\[ 8\pi m^+(x_0)=\left( \int_{I_+}\alpha m(x_0, \alpha)P(d\alpha)\right)^2\leq m^+(x_0)^2 \] 
andtherefore, 
\[ m_+(x_0)\geq 8\pi. \] 
It thus holds that 
\[ n^+(x_0)^2=\left( \int_{I_+}\alpha m(\alpha, x_0)P(d\alpha)\right)^2=8\pi m^+(x_0)\geq (8\pi)^2 \]
or, 
\begin{equation} 
n^+(x_0)\geq 8\pi, \ x_0\in {\cal S}_+={\cal S}. 
 \label{13.9}
\end{equation} 

Here we obtain 
\begin{equation} 
f_k\rightharpoonup n^+(x_0)\delta_{x_0}+\tilde s^+(x) \ \mbox{in ${\cal D}'(B_R(x_0))$}, \quad 0<R\ll 1  
 \label{14} 
\end{equation} 
for 
\[ \tilde s^\pm(x)=\int_{I_\pm}\vert \alpha\vert r(x,\alpha)P(d\alpha)\geq 0 \] 
by (\ref{4}). It holds also that  
\[ v_k\rightarrow \exists v \ \mbox{in $W^{1.q}(B_R(x_0))$}, \ 1\leq q<2 \] 
by \cite{bs73}, and locally uniformly in $B_R(x_0)\setminus \{ x_0\}$. Then the standard property of the Green function \cite{aubin} ensures 
\begin{eqnarray*} 
v(x) & \geq & \frac{n^+(x_0)}{2\pi}\log \frac{1}{d(x,x_0)}-C \\ 
& \geq & 4\log \frac{1}{d(x,x_0)}-C, \quad x\in B_R(x_0)\setminus \{x_0\}. 
\end{eqnarray*}  
where $d$ denotes the geodesic distance on $\Omega$. 

>From $\alpha_+>\frac{1}{2}$ it follows that 
\[ +\infty=\int_\Omega e^{\alpha_+v}\leq \liminf_{k\rightarrow\infty}\int_\Omega e^{\alpha_+v_k}, \] 
and therefore, 
\[ \mbox{supp} \ \mu(dxd\alpha)\subset {\cal S}_+\times I_+ \] 
by (\ref{4}), where  
\begin{equation} 
\mu(dxd\alpha)=\left[ \sum_{x_0\in {\cal S}}m(x_0, \alpha)\delta_{x_0}(dx)+r(x, \alpha)dx\right]P(d\alpha). 
 \label{mu}
\end{equation} 
Then the residual vanishing $s^+=0$ follows. \hfill $\Box$ 

\vspace{2mm} 

{\it Proof of Proposition \ref{pro4}:} \ From the assumption it follows that 
\begin{eqnarray} 
& & \mu_k(dxd\alpha)\equiv \lambda_k\frac{e^{\alpha v_k}}{\int_\Omega e^{\alpha v_k}}dxP(d\alpha) \label{muk} \\ 
& & \quad \rightharpoonup \mu(dxd\alpha)=\sum_{x_0\in {\cal S}}m(x_0, \alpha)\delta_{x_0}(dx)P(d\alpha), \nonumber 
\end{eqnarray}  
which implies
\[ \iint_{I_+\times \Omega}\mu_k=\lambda_k \ \rightarrow \ \langle 1, \mu(dxd\alpha)\rangle=\sum_{x_0\in {\cal S}}\int_{I_+}m(x_0, \alpha)P(d\alpha) \] 
and 
\[ 
\iint_{I_+\times \Omega}\alpha \mu_k=\lambda_k\int_{I_+}\alpha P(d\alpha) \ \rightarrow \ 
\langle \alpha, \mu(dxd\alpha)\rangle=\sum_{x_0\in {\cal S}}\int_\Omega \alpha m(x_0,\alpha)P(d\alpha). 
\]  
Hence we obtain 
\[ \overline{\lambda}=\sum_{x_0\in {\cal S}}\int_{I_+}m(x_0, \alpha)P(d\alpha) \] 
and 
\[ \overline{\lambda}\int_{I_+}\alpha P(d\alpha)=\sum_{x_0\in {\cal S}}\int_{I_+} \alpha m(x_0,\alpha)P(d\alpha). \]  

We have, on the other hand, (\ref{6}), and therefore,  
\begin{eqnarray} 
8\pi\overline{\lambda} & = & \sum_{x_0\in {\cal S}}\left( \int_{I_+}\alpha m(x_0,\alpha)P(d\alpha)\right)^2 \nonumber\\ 
& \leq & \left(\sum_{x_0\in {\cal S}}\alpha m(x_0,\alpha)P(d\alpha)\right)^2=\overline{\lambda}^2\left( \int_{I_+}\alpha P(d\alpha)\right)^2 
 \label{14.1}
\end{eqnarray} 
by $\mbox{supp} \ P\subset I_+$. We thus obtain 
\[ \frac{8\pi}{\left( \int_{I_+}\alpha P(d\alpha)\right)^2}\leq \overline{\lambda}, \] 
and therefore, $\overline{\lambda}$ in (\ref{3}) is attained by $K_+=I_+$. The equality holds in (\ref{14.1}). and hence $\sharp \ {\cal S}=1$. 

Finally, it holds that 
\[ \int_\Omega \mu_kdx=\lambda_kP(d\alpha)\rightharpoonup \overline{\lambda}P(d\alpha)=\int_\Omega \mu(dxd\alpha)=m(x_0,\alpha)P(d\alpha) \] 
in ${\cal M}(I)$ for ${\cal S}=\{x_0\}$, and therefore, 
\[ m(x_0, \alpha)=\overline{\lambda}, \ \mbox{$P(d\alpha)$ a.e.}.  \] 
\hfill $\Box$ 

\vspace{2mm} 

The minimum $\overline{\lambda}$ of (\ref{3}) at $K_+=I_+$ in Proposition \ref{pro4} is strict if $\alpha_+>\frac{1}{2}$. This property is used to establish mass identity in $\S$\ref{sec4}. 

\begin{prop}\label{pro5}
Under (\ref{9.2}) it holds that 
\[ \overline{\lambda}<\frac{8\pi P(K_+)}{\left( \int_{K_+}\alpha P(d\alpha)\right)^2}, \] 
where 
\[ K_+=(\alpha_\ast,1], \ K_+=[\alpha_\ast,1] \ \ \mbox{for $\alpha_\ast>\alpha_+$} \] 
or  
\[ K_+=(\alpha_+,1] \ \ \mbox{if $P(\{\alpha_+\})>0$}. \]  
\end{prop}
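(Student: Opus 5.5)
The plan is to reduce the statement to an elementary inequality between the value of the quotient in (\ref{3}) at $K_+=I_+$ and its value at the smaller set $K_+$. Since $\mbox{supp} \ P\subset I_+$, taking $K_+=I_+$ in (\ref{3}) gives
\[ \overline{\lambda}\leq \frac{8\pi}{\left(\int_{I_+}\alpha P(d\alpha)\right)^2}. \]
It therefore suffices to show
\[ \frac{8\pi}{\left(\int_{I_+}\alpha P(d\alpha)\right)^2}<\frac{8\pi P(K_+)}{\left(\int_{K_+}\alpha P(d\alpha)\right)^2} \]
for each admissible $K_+$. If $P(K_+)=0$ the right-hand side is $+\infty$ under the natural convention, and there is nothing to prove, so we assume $P(K_+)>0$.

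\textbf{Setting up the comparison.} Put $L=I_+\setminus K_+$ and introduce
\[ a=P(K_+), \quad c=P(L), \quad b=\int_{K_+}\alpha P(d\alpha), \quad d=\int_L\alpha P(d\alpha), \]
so that $a+c=1$. First I check that $c>0$ in every case.
\begin{itemize}
\item For $\alpha_\ast>\alpha_+$, the set $L$ contains $[\alpha_+,\alpha_\ast)$. Since $\alpha_+=\min \mbox{supp} \ P$, this interval is a relatively open neighbourhood of a support point, so $c>0$.
\item In the last case $L=\{\alpha_+\}$, and $c=P(\{\alpha_+\})>0$ by hypothesis.
\end{itemize}
Next write $b=\beta a$ and $d=\gamma c$.
\begin{itemize}
\item On $K_+$ we have $\alpha\leq 1$, hence $\beta\leq 1$.
\item On $L\cap \mbox{supp} \ P$ we have $\alpha\geq\alpha_+$, hence $\gamma\geq\alpha_+>\frac12$ by (\ref{9.2}).
\end{itemize}

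\textbf{Reduction to a scalar inequality.} The desired strict inequality reads $(b+d)^2a>b^2$. Dividing by $a$ and taking square roots, this becomes
\[ \beta a+\gamma(1-a)>\beta\sqrt{a}, \qquad 0<a<1. \]
Using $\gamma>\frac12\geq\frac{\beta}{2}$ together with $1-a=c>0$, I estimate
\[ \beta a+\gamma(1-a)-\beta\sqrt a>\beta a+\frac{\beta}{2}(1-a)-\beta\sqrt a=\frac{\beta}{2}\left(1-\sqrt a\right)^2\geq 0. \]
This gives the claim.

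\textbf{Where the care is needed.} The only delicate point is strictness. It comes from two facts together: $c>0$, which is why the sets in the statement must remove a set of positive $P$-measure, and $\gamma\geq\alpha_+>\frac12$ strictly. This is exactly where the hypothesis $\alpha_+>\frac12$ of (\ref{9.2}) enters. With $\alpha_+=\frac12$ the gap could close as $a\uparrow 1$, consistent with the threshold $\frac{\sqrt\tau}{1+\sqrt\tau}\to\frac12$ in (\ref{tau}).
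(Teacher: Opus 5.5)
Your proof is correct and follows the paper's argument: bound $\overline{\lambda}$ by the quotient at $K_+=I_+$, split off $L=I_+\setminus K_+$ with $P(L)>0$, use $\int_L\alpha P(d\alpha)\geq\alpha_+P(L)$ and $\int_{K_+}\alpha P(d\alpha)\leq P(K_+)$, and reduce to a scalar inequality. The only difference is in that last step: the paper shows $t\mapsto\log(b+t)-2\log(a+\gamma t)$ is strictly decreasing (its $a,b$ are your $b,a$), while you normalize $P(K_+)+P(L)=1$ and complete the square, and your explicit check that $P(L)>0$ in each case fills in a point the paper only asserts.
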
 

{\it Proof:} \ Putting 
\[ a\equiv \int_{K_+}\alpha P(d\alpha)\leq b\equiv \int_{K_+}P(d\alpha) \] 
and 
\[ t=\int_{L_+}P(d\alpha)>0, \ L_+=I_+\setminus K_+,  \] 
we obtain 
\[ \int_{I_+}\alpha P(d\alpha)=a+\int_{L_+}\alpha P(d\alpha)\geq a+\alpha_+t. \] 
Since 
\[ \frac{1}{\left( \int_{I_+}\alpha P(d\alpha)\right)^2}\leq \frac{b+t}{(a+\alpha_+t)^2}, \quad \frac{P(K_+)}{\left( \int_{K_+}\alpha P(d\alpha)\right)^2}=\frac{b}{a^2}, \] 
it suffices to confirm  
\begin{equation} 
\frac{b+t}{(a+\alpha_+t)^2}<\frac{b}{a^2}, \ t>0. 
 \label{claim}
\end{equation} 

In fact, putting 
\[ g(t)=\log \frac{b+t}{(a+\gamma t)^2}=\log(b+t)-2\log(a+\gamma t),  \ \gamma=\alpha_+>\frac{1}{2} \] 
we obtain 
\begin{eqnarray*} 
g'(t) & = & \frac{1}{b+t}-\frac{2\gamma}{a+t\gamma} = \frac{1}{(b+t)(a+\gamma t)}\{ a+t\gamma-2\gamma (b+t)\} \\ 
& & \leq \frac{1}{(b+t)(a+\gamma t)}(b+t)(1-2\gamma)<0 
\end{eqnarray*}  
by $\gamma>\frac{1}{2}$, and therefore, (\ref{claim}). \hfill $\Box$ 

\vspace{2mm}

Concluding this section, we note the following fact. 

\begin{rem} 
If 
\[ P=\sum_{i=1}^n\tau_i\delta_{\gamma_i}, \ n\geq 2 \] 
with 
\[ 0<\gamma_1<\cdots<\gamma_n=1, \ \tau_i>0, \ \sum_{i=1}^n\tau_i=1, \ 0<\gamma_{n-1}\ll 1, \] 
the non-residual vanishing $s^+\not \equiv 0$ arises in (\ref{4})-(\ref{6}) for $(v_k, \lambda_k)$, $k=1,2, \cdots$, satisfying (\ref{3.7}) and (\ref{111}).  
\end{rem} 

{\it Proof:} \ If residual vanishing occurs, we have 
\[ \frac{8\pi}{\left( \int_{I_+}\alpha P(d\alpha\right)^2}\leq \frac{8\pi P(K_+)}{\left( \int_{K_+}\alpha P(d\alpha)\right)^2} \] 
or, 
\begin{equation} 
\left( \int_{K_+}\alpha P(d\alpha)\right)^2\leq P(K_+)\cdot \left( \int_{I_+}\alpha P(d\alpha)\right)^2 
 \label{14.9}
\end{equation} 
for any $K_+\subset\{\gamma_i \mid 1\leq i\leq n\}$ by Proposition \ref{pro4}. For $K=\{\gamma_n\}$ inequality (\ref{14.9}) means 
\[ \tau_n\leq \left( \sum_{i=1}^{n-1}\tau_i \gamma_i+\tau_n\right)^2 \] 
which is impossible for $0<\gamma_{n-1}\ll 1$ by $0<\tau_n<1$. \hfill $\Box$ 

\vspace{2mm} 

As is noted in $\S$\ref{sec1}, the residual vanishing implies (\ref{tau}) for $P(d\alpha)$ in (\ref{9.3}) and hence non-residual vanishing occurs iff $\gamma<\frac{\sqrt{\tau}}{1+\sqrt{\tau}}$ in this case. So far, the non-residual vanishing is confirmed for $P=\delta_1$ if the source terms arise and two of them make a  collision (\cite{lt16}).

\section{One-sided distibutions: proof of Theorem \ref{thm2}}\label{sec4}

In this section we take the case (\ref{9.2}). Let $(v_k, \lambda_k)$, $k=1,2,\cdots$, be as in (\ref{3.7}), and assume (\ref{12.2})-(\ref{12.4}). 
Then we apply the scaling transformation of \cite{st18} to derive (\ref{9})-(\ref{14.2}). In the next section we confirm more details of the estimates used here in a slight more generality. 

 We put 
 \begin{equation} 
w_k^\alpha(x)=\alpha v_k(x+x_k)-\log\int_\Omega e^{\alpha v_k}, \ w_k=w_k^1 
 \label{wk}
\end{equation} 
for $0<\alpha\leq 1$.  It holds that 
\[ \frac{d}{d\alpha}w_k^\alpha(0)=v_k(x_k)-\frac{\int_\Omega v_ke^{\alpha v_k}}{\int_\Omega e^{\alpha v_k}}\geq 0 \] 
by $v_k(x_k)=\Vert v_k\Vert_\infty$, and hence 
\[ w_k(0)\geq w_k^\alpha(0). \] 
It holds also that 
\begin{eqnarray*} 
w_k^\alpha(0) & = & \alpha v_k(x_k)-\log \int_\Omega e^{\alpha v_k} = \alpha\left( v_k(x_k)+\log \int_\Omega e^{v_k} \right)-\log\int_\Omega e^{\alpha v_k} \\ 
& = & \alpha w_k(0)+\log\frac{\left( \int_\Omega e^{v_k}\right)^\alpha}{\int_\Omega e^{\alpha v_k}} 
\end{eqnarray*} 
with 
\[ \frac{\left( \int_\Omega e^{v_k}\right)^\alpha}{\int_\Omega e^{\alpha v_k}}=\vert \Omega \vert^{\alpha-1}\cdot \frac{\left( \frac{1}{\vert \Omega\vert}\int_\Omega e^{v_k}\right)^\alpha}{\frac{1}{\vert \Omega\vert}\int_\Omega e^{\alpha v_k}}\geq \vert \Omega\vert^{\alpha-1} \] 
and therefore, 
\begin{equation} 
w_k(0)\rightarrow+\infty \ \ \Leftrightarrow \ \ w_k^\alpha(0)\rightarrow+\infty, \ 0<\alpha\leq 1. 
 \label{xx12}
\end{equation} 

Here we have 
\[ w_k^\alpha(x)=\log \frac{e^{\alpha v_k(x+x_k)}}{\int_\Omega e^{\alpha v_k}}\leq w_k^\alpha(0) \leq w_k(0), \] 
and therefore, $w_k(0)\leq C$ implies $\Vert f_k\Vert_\infty\leq C$ in (\ref{12.1}):   
\[ f_k(x)=\lambda_k\int_{I_+}\alpha\frac{e^{\alpha v_k(x)}}{\int_\Omega e^{\alpha v_k}}P(d\alpha). \]
Then we obtain $\Vert v_k\Vert_\infty\leq C$ by (\ref{18.1}), a contradiction. It thus holds that (\ref{12.3}), passing to a subsequence, and also 
\[ w_k^\alpha(0)\rightarrow +\infty, \ 0<\alpha \leq 1 \]
by (\ref{xx12}).  

Recall the transformation (\ref{15.1}) 
induced by the iso-thermal coordinate on $\Omega$, and put 
\begin{equation} 
\tilde w_k^\alpha(x)=w_k^\alpha(\sigma_kx)+2\log\sigma_k, \ \tilde w_k=\tilde w_k^1, \quad  x\in \tilde B_{\sigma_k^{-1}R}.  
 \label{scale1}
\end{equation}  
Let $\tilde g=\tilde g_k$ be the metric defined by (\ref{pb}). Then it follows that 
\begin{eqnarray*} 
\Delta_{\tilde g}\tilde w_k & = & e^{\xi_k(X')}\Delta_{X}w_k(\sigma_k X)  \\ 
& = & \sigma_k^2e^{\xi_k(X')}(\Delta_{X'}w_k)(\sigma_kX) \\ 
& = & \sigma_k^2\Delta_gw_k, 
\end{eqnarray*}  
and therefore, 
\begin{eqnarray*} 
-\Delta \tilde w_k(x) & = & -\sigma_k^2\Delta v_k(\sigma_kx) \\ 
&  = & \sigma_k^2\lambda_k\int_{I_+}\alpha\left( e^{\alpha v_k(\sigma_kx)-\log\int_\Omega e^{\alpha v_k}}-\frac{1}{\vert \Omega\vert}\right) \ P(d\alpha) \\ 
& = &  \lambda_k\int_{I_+}\alpha\left( e^{\tilde w_k^\alpha(x)}-\frac{\sigma_k^2}{\vert \Omega\vert}\right) \ P(d\alpha). 
\end{eqnarray*} 

For $\tilde f_k(x)$ defined by (\ref{scale2}), we obtain  
\begin{equation} 
\tilde f_k=\lambda_k\int_{I_+}\alpha e^{\tilde w_k^\alpha}P(d\alpha),  
 \label{scale3}
\end{equation} 
and therefore,   
\[ -\Delta \tilde w_k=\tilde f_k-\lambda_k\frac{\sigma_k^2}{\vert \Omega\vert}\int_{I_+}\alpha P(d\alpha) \ \mbox{in $\tilde B_{\sigma_k^{-1}R}$}. \]  
It holds also that 
\[ \tilde w_k^\alpha\leq \tilde w_k^\alpha(0)\leq \tilde w_k(0)=0. \] 
From (\ref{4}) and (\ref{12.1}), furthermore, it follows that 
\begin{equation} 
f_kdx \rightharpoonup \nu^+(dx)\equiv s^+dx+\sum_{x_0\in {\cal S}_+}n^+(x_0)\delta_{x_0}(dx)
 \label{7}
\end{equation} 
in ${\cal M}(\Omega)=C(\Omega)'$. 

Similarly to Proposition 2.1 of \cite{st18}, then, a subsequence admits the convergence
\begin{equation} 
\tilde w_k\rightarrow \tilde w, \ \tilde f_k\rightarrow \tilde f \quad \mbox{in $C^2_{loc}({\bf R}^2)$}.  
 \label{conv}
\end{equation} 
There arises also that 
\begin{equation} 
-\Delta \tilde w=\tilde f\not\equiv 0, \ \tilde w\leq \tilde w(0)=0, \ 0\leq \tilde f \leq \overline{\lambda}\int_{I_+}\alpha P(d\alpha) \quad \mbox{in ${\bf R}^2$}  
 \label{eq1}
\end{equation}  
and 
\begin{equation} 
\int_{{\bf R}^2}e^{\tilde w}\leq 1, \ \beta_0\equiv \int_{{\bf R}^2}\tilde f\leq \overline{\lambda}\int_{I_+}\alpha P(d\alpha).  
 \label{eq2}
\end{equation}
 
Here we note the following fact. 
\begin{prop}\label{pro8} 
If $\mbox{supp} \ P\subset I_+$ the equality 
\begin{equation} 
\beta_0\equiv \int_{{\bf R}^2}\tilde f=\overline{\lambda}\int_{I_+}\alpha P(d\alpha). 
 \label{19}
\end{equation} 
is equivalent to the residual vanishing $s^+=0$ in (\ref{rv}) and mass identity (\ref{7.1}) for $x_0$ defined by (\ref{12.2})-(\ref{12.4}).  
\end{prop}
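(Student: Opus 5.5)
The plan is to compare three quantities by computing the total mass of $f_k$ in two ways. From (\ref{12.1}) and $\mbox{supp}\ P\subset I_+$,
\[ \int_\Omega f_k=\lambda_k\int_{I_+}\alpha P(d\alpha)\rightarrow \overline{\lambda}\int_{I_+}\alpha P(d\alpha). \]
Passing to the limit in (\ref{7}) in ${\cal M}(\Omega)$ gives the decomposition
\begin{equation*}
\overline{\lambda}\int_{I_+}\alpha P(d\alpha)=\int_\Omega s^+ dx+\sum_{x\in {\cal S}_+}n^+(x).
\end{equation*}
Here I would note that the density of $\nu^+$ is $\tilde s^+=\int_{I_+}\alpha r\,P(d\alpha)$. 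This vanishes iff $s^+=\int_{I_+}r\,P(d\alpha)$ vanishes, since $\alpha\geq \alpha_+>0$ on $\mbox{supp}\ P$; and even without (\ref{9.2}), $\alpha>0$ $P$-a.e. off a null set handles the remaining case.

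The second step is the chain of inequalities
\begin{equation*}
\beta_0\leq n^+(x_0)\leq n^+(x_0)+\sum_{x\in{\cal S}_+\setminus\{x_0\}}n^+(x)+\int_\Omega \tilde s^+\,dx=\overline{\lambda}\int_{I_+}\alpha P(d\alpha).
\end{equation*}
The first inequality is the key local estimate. For fixed $L>0$, the $C^2_{loc}$ convergence (\ref{conv}) and the scaling (\ref{scale2}) give
\[ \int_{\tilde B_L}\tilde f=\lim_k\int_{B_{L\sigma_k}(x_k)}f_k, \]
with the metric factor $e^{\xi}$ tending to $1$. Since $\sigma_k\to 0$ and $x_k\to x_0$, for any $0<r\ll 1$ this is at most $\limsup_k\int_{B_r(x_0)}f_k\leq \nu^+(\overline{B_r(x_0)})$. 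Letting $r\downarrow 0$ (and using that $\nu^+$ has an $L^1$ part plus finitely many atoms) yields a bound by $n^+(x_0)$. Letting $L\to\infty$ then gives $\beta_0\leq n^+(x_0)$. The second inequality uses $n^+\geq 0$ and $\tilde s^+\geq 0$.

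The equivalence follows from this chain. If (\ref{19}) holds, every inequality is an equality. Then $\int\tilde s^+=0$, so $s^+=0$, and $\beta_0=n^+(x_0)$, which is (\ref{7.1}); as a by-product, $n^+(x)=0$ for the other points, so in fact ${\cal S}=\{x_0\}$. Conversely, suppose $s^+=0$ and (\ref{7.1}) hold. By Proposition \ref{pro4} we have $\sharp{\cal S}=1$, so ${\cal S}=\{x_0\}$ by (\ref{12.4}), and the decomposition above reduces to $\overline{\lambda}\int_{I_+}\alpha P(d\alpha)=n^+(x_0)=\beta_0$, which is (\ref{19}). If I wanted to avoid relying on Proposition \ref{pro4}, I would instead use that each point of ${\cal S}_+$ carries mass at least $8\pi$ by (\ref{13.9}), but then an extra argument is needed; so I would cite Proposition \ref{pro4} directly.

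I expect the main obstacle to be the rigorous inequality $\beta_0\leq n^+(x_0)$. It requires a careful exchange of limits: first $k\to\infty$ at fixed $L$ and fixed $r$, then $r\to 0$, then $L\to\infty$. It also requires handling the isothermal chart, since the rescaled ball $\tilde B_{L}$ corresponds to a geodesic ball of radius comparable to $L\sigma_k$. The weak convergence in (\ref{7}) only gives upper semicontinuity on closed sets, so I would work with closed balls.
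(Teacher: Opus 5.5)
Your proposal is correct and follows the paper's proof essentially step by step. It uses the same chain $\beta_0\leq n^+(x_0)\leq \overline{\lambda}\int_{I_+}\alpha P(d\alpha)$ from (\ref{7}) and the rescaled-ball limit exchange, where your closed-ball/portmanteau bookkeeping is more careful than the paper's. In the converse, Proposition \ref{pro4} supplies ${\cal S}=\{x_0\}$, exactly as in the paper.

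One small correction: your remark that $\alpha>0$ holds $P$-a.e.\ ``even without (\ref{9.2})'' is false when $P(\{0\})>0$. In that case the slice $\alpha=0$ contributes $r(\cdot,0)=\overline{\lambda}/\vert\Omega\vert$ to $s^+$ but nothing to $\tilde s^+$. So the step $\tilde s^+=0\Rightarrow s^+=0$ in the forward direction genuinely needs $P(\{0\})=0$, which the standing assumption (\ref{9.2}) of this section provides. The paper glosses over the same point by writing $s^+$ in place of $\tilde s^+$ in (\ref{7}).
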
 
{\it Proof:}  \ First, we have 
\begin{eqnarray} 
& & \langle 1, f_kdx\rangle=\lambda_k\int_{I_+}\alpha P(d\alpha) \nonumber\\ 
& & \quad \rightarrow \overline{\lambda}
\int_{I_+}\alpha P(d\alpha)
=\sum_{x_0'\in {\cal S}}n^+(x_0')+\Vert s^+\Vert_1\geq n^+(x_0) 
 \label{42.1}
\end{eqnarray} 
by (\ref{7}) and  (\ref{12.1}).  For $r<\sigma_k^{-1}R$, on the other hand, we have 
\[ 
\int_{\tilde B_r}\tilde f\leq 
\int_{\tilde B_{\sigma_k^{-1}R}}\tilde f_k=\int_{B_R(x_k)}f_k. 
\]  
Letting $k\rightarrow\infty$,  $R\rightarrow 0$, and $r\rightarrow+\infty$ in turn, we obtain 
\begin{equation}  
\beta_0=\int_{{\bf R}^2}\tilde f\leq n^+(x_0) 
 \label{42.2}
\end{equation} 
by (\ref{7}).  If (\ref{19}) holds, therefore, there arises ${\cal S}=\{x_0\}$, (\ref{7.1}), and $s^+=0$. 

Suppose, second, $s^+=0$ in (\ref{rv}) and (\ref{7.1}) for $x_0$ defined by (\ref{12.4}).  Then we obtain ${\cal S}=\{x_0\}$ with $s^+=0$, and therefore, 
\begin{equation} 
n^+(x_0)=\overline{\lambda}\int_{I_+}\alpha P(d\alpha) 
 \label{42.3}
\end{equation} 
by (\ref{42.1}).  Then (\ref{7.1}) and (\ref{42.3}) imply (\ref{19}). \hfill $\Box$ 

\begin{rem} 
As in Lemma 3.2 of \cite{st18} we have  
\begin{equation} 
\lim_{r\rightarrow\infty}\lim_{k\rightarrow\infty}\int_{\tilde B_{\sigma_k^{-1}R}\setminus \tilde B_r}\tilde f_k=0 
 \label{rr}
\end{equation} 
for any $R\gg 1$ under (\ref{rv}) and (\ref{7.1}).  
\end{rem} 

To confirm (\ref{19}), we use the following fact proven by \cite{st18}, where 
\[ \alpha_{\inf}=\inf_{\cal A}\alpha \] 
for 
\[ {\cal A}=\{ \alpha \in \mbox{supp} \ P \mid \limsup_{k\rightarrow\infty}(w_k^\alpha(0)-w_k(0))>-\infty\}. \] 
It is obvious that $1\in {\cal A}\neq\emptyset$.  

\begin{prop}\label{pro9} 
Assume $\mbox{supp} \ P\subset I_+$.  
\begin{enumerate} 
\item There is $\tilde \psi \in L^1(I_+, P(d\alpha))$ such that $0\leq \tilde\psi\leq 1$, $P$-a.e., and  
\[ \beta_0=\overline{\lambda}\int_{I_+}\alpha \tilde \psi(\alpha)P(d\alpha). \] 
\item There is $\alpha_d\in I_+$ such that 
\[ \tilde \psi=\chi_{I_d}, \ \mbox{$P$-a.e.}, \] 
where $I_d=[\alpha_d,1]$ or $I_d=(\alpha_d, 1]$. 
\item If $\alpha_d\leq \alpha_{\inf}$ it holds that 
\begin{equation} 
\tilde \psi=\chi_{I_{\inf}}, \ \mbox{$P$-a.e.}. 
 \label{47.7}
\end{equation} 
where 
\[ I_{\inf}=\left\{ \begin{array}{ll} 
[\alpha_{\inf}, 1], & \alpha_{\inf}\in {\cal A} \\ 
(\alpha_{\inf}, 1], & \alpha_{\inf}\not \in {\cal A}. \end{array} \right. \]  
\item It holds that 
\[ 8\pi\int_{I_+}\tilde \psi(\alpha)P(d\alpha)=\overline{\lambda}\left( \int_{I_+}\alpha\tilde \psi(\alpha)P(d\alpha)\right)^2. \] 
\end{enumerate}  
\end{prop}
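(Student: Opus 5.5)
The plan is to work entirely in the rescaled picture (\ref{scale1})--(\ref{scale3}) and to decompose the limit $\tilde f$ according to the intensity $\alpha$. For each $\alpha\in\mbox{supp}\,P$ we have $\tilde w_k^\alpha\leq 0$ and
\[ \tilde w_k^\alpha(x)=\alpha\tilde w_k(x)+(\tilde w_k^\alpha(0)-\alpha\tilde w_k(0)), \]
because $w_k^\alpha$ is an affine function of $v_k$. Hence $\tilde w_k^\alpha$ differs from $\alpha\tilde w_k$ only by the constant
\[ c_k(\alpha)=w_k^\alpha(0)-w_k(0)+2(1-\alpha)\log\sigma_k\cdot 0+\ldots, \]
which is nonpositive and monotone in $\alpha$ by the derivative computation preceding (\ref{xx12}). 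Passing to a subsequence, using a diagonal argument over a countable dense set of $\alpha$ together with this monotonicity, we get $e^{\tilde w_k^\alpha}\to \psi(\alpha)e^{\alpha\tilde w}$ locally uniformly. Here $\psi(\alpha)=\lim e^{c_k(\alpha)}\in[0,1]$ is nondecreasing in $\alpha$.

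For item 1, we integrate over $\mathbf R^2$ and use the definition of $\beta_0$. Writing $\tilde f=\overline\lambda\int\alpha\psi(\alpha)e^{\alpha\tilde w}P(d\alpha)$ and comparing with the total masses $\int_{\mathbf R^2} e^{\tilde w_k^\alpha}\le 1$, we set
\[ \tilde\psi(\alpha)=\int_{\mathbf R^2}\psi(\alpha)e^{\alpha\tilde w}dx\in[0,1]. \]
Then $\beta_0=\overline\lambda\int\alpha\tilde\psi\,P(d\alpha)$ by Fubini, and the bound $\tilde\psi\le1$ follows from Fatou.

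For item 2, we first classify $\tilde w$. It solves $-\Delta\tilde w=\overline\lambda\int\alpha\psi e^{\alpha\tilde w}P$ with finite total mass. A Brezis--Merle / Chen--Li type argument gives $\tilde w(x)=-\frac{\beta_0}{2\pi}\log|x|+O(1)$ at infinity with $\beta_0>4\pi$. The plan is then to show that $\int e^{\alpha\tilde w}<\infty$ forces $\alpha\beta_0>4\pi$. Hence, for those $\alpha$ with $\psi(\alpha)>0$, either the mass $\int\psi e^{\alpha\tilde w}$ captures all of the mass $\int e^{\tilde w_k^\alpha}\to1$ (no loss at infinity), or $\psi(\alpha)=0$. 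Showing that the $\alpha$-mass is not partially lost at infinity is the main obstacle. I would attack it by a Pohozaev identity on $\tilde B_{\sigma_k^{-1}R}$ for each fixed $\alpha$, combined with the monotonicity of $\psi$. This should yield the dichotomy $\tilde\psi\in\{0,1\}$. Monotonicity of $\psi$ in $\alpha$ then makes $\{\tilde\psi=1\}$ an interval $I_d$ containing $1$.

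For item 3, note that $\psi(\alpha)=0$ whenever $\alpha\notin{\cal A}$, since then $c_k(\alpha)\to-\infty$. Thus $\tilde\psi=0$ below $\alpha_{\inf}$. Conversely, for $\alpha\in{\cal A}$ and $\alpha\ge\alpha_d$ we have $\tilde\psi=1$. The condition $\alpha_d\le\alpha_{\inf}$ therefore forces $I_d=I_{\inf}$, with the endpoint decided by whether $\alpha_{\inf}\in{\cal A}$.

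For item 4, we apply the Pohozaev identity to the entire solution $\tilde w$ on $\mathbf R^2$. Multiplying by $x\cdot\nabla\tilde w$ and using the decay at infinity gives
\[ \beta_0^2=8\pi\,\overline\lambda\int\psi\int e^{\alpha\tilde w}\,P(d\alpha)=8\pi\overline\lambda\int\tilde\psi\,P. \]
Substituting item 1 then yields the stated quadratic identity.
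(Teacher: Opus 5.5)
The paper does not prove this proposition; it quotes it from \cite{st18}. Your items 1 and 4 are sound. With $c_k(\alpha)=\tilde w_k^\alpha(0)=w_k^\alpha(0)-w_k(0)$ one has $e^{\tilde w_k^\alpha}=e^{c_k(\alpha)}e^{\alpha\tilde w_k}$, and Helly selection together with Tonelli and Fatou gives item 1. The Pohozaev identity for the entire solution $\tilde w$ gives $\beta_0^2=8\pi\overline{\lambda}\int_{I_+}\tilde\psi\,P(d\alpha)$, which is item 4, and it uses nothing from item 2.

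The gap is item 2. Your dichotomy rests on $\int e^{\tilde w_k^\alpha}\to 1$ plus ``no loss at infinity'', and neither is available:
\begin{itemize}
\item Up to the metric factor, $\int_{\tilde B_{\sigma_k^{-1}R}}e^{\tilde w_k^\alpha}=\int_{B_R(x_k)}e^{\alpha v_k}/\int_\Omega e^{\alpha v_k}$. Its limit is the share of the $\alpha$-mass collapsing at $x_0$, not $1$, because the proposition assumes neither $s^+=0$ nor $\sharp{\cal S}=1$.
\item Absence of loss in the neck $r\le|x|\le\sigma_k^{-1}R$ is exactly the tail control (\ref{rr}), i.e.\ the mass identity. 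The paper derives that \emph{from} this proposition (Proposition \ref{pro8}, Lemma \ref{lem4.2}), and it fails for some $P$ (Theorem 1.2 of \cite{st18}). Invoking it here is circular.
\item There is no Pohozaev identity ``for each fixed $\alpha$''. All components are coupled through the single function $\tilde w_k$, so Pohozaev yields one scalar relation, namely item 4.
\end{itemize}

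In fact no argument built only from your ingredients (rescaling, $\tilde w_k^\alpha\le 0$, monotonicity of $\psi$, Pohozaev) can give item 2, since they apply to every blowup sequence of (\ref{2.1}). Take $P=\delta_1$ and solutions with $\lambda_k\to 16\pi$ blowing up at two points (such sequences exist on suitable surfaces). Then $\tilde\psi(1)=\int_{\mathbf R^2}e^{\tilde w}=\frac12$, while item 4 still holds, since $8\pi\cdot\frac12=16\pi\cdot\frac14$.

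The missing idea is that the limit parameter is $\overline\lambda$, characterized by (\ref{3}). Prove item 4 first, then argue as follows.
\begin{itemize}
\item The map $Q(\phi)=8\pi\int\phi\,P(d\alpha)-\overline\lambda\left(\int\alpha\phi\,P(d\alpha)\right)^2$ is concave and weak-$*$ continuous on $\{0\le\phi\le1\}\subset L^\infty(I_+,P)$. The extreme points of this set are indicators, so (\ref{3}) yields $Q\ge0$ on it, and item 4 says $Q(\tilde\psi)=0$.
\item Hence the convex set of pairs $(a,b)=(\int\phi P,\int\alpha\phi P)$ lies in $\{8\pi a\ge\overline\lambda b^2\}$ and touches the parabola at the point $(a_0,b_0)$ coming from $\tilde\psi$. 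The tangent line there supports this set, so $\tilde\psi$ minimizes the linear functional $\phi\mapsto\int(8\pi-2\overline\lambda b_0\alpha)\phi\,P(d\alpha)$.
\item This forces $\tilde\psi=1$ for $\alpha>\alpha_d$ and $\tilde\psi=0$ for $\alpha<\alpha_d$, $P$-a.e., with $\alpha_d=4\pi/(\overline\lambda b_0)=4\pi/\beta_0$.
\item A fractional value at an atom at $\alpha_d$ would place a nondegenerate segment of that tangent line inside the set. Off the tangency point the line lies in $\{8\pi a<\overline\lambda b^2\}$, contradicting $Q\ge0$.
\end{itemize}
With item 2 established, your item 3 argument goes through once you note $\tilde\psi(\alpha)>0\iff\psi(\alpha)>0$.
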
 

Now we show the following facts. 

\begin{lemm}\label{lem4.1}
If (\ref{9.2}) holds with $P(\{ \alpha_+\})>0$, there arises that 
\[ \alpha_+=\alpha_{\inf}\in {\cal A}. \] 
\end{lemm}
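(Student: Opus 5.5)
The plan is to show $\alpha_+\in{\cal A}$ by contradiction. The equality $\alpha_{\inf}=\alpha_+$ then follows at once. Indeed, ${\cal A}\subset \mbox{supp}\,P\subset[\alpha_+,1]$ gives $\alpha_{\inf}\geq\alpha_+$, and $\alpha_+\in{\cal A}$ gives $\alpha_{\inf}\leq \alpha_+$. So I assume $\alpha_+\notin{\cal A}$, that is,
\[ w_k^{\alpha_+}(0)-w_k(0)\rightarrow-\infty. \]
I then derive two incompatible facts about the function $\tilde\psi$ of Proposition \ref{pro9}.

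\emph{Step 1 (from the variational side).} By Proposition \ref{pro9}, items 2 and 4, $\tilde\psi=\chi_{I_d}$ $P$-a.e. Also $\beta_0>0$ because $\tilde f\not\equiv 0$ in (\ref{eq1}), so $P(I_d)>0$ and $\int_{I_d}\alpha P(d\alpha)>0$. Item 4 then reads
\[ \overline{\lambda}=\frac{8\pi P(I_d)}{\left(\int_{I_d}\alpha P(d\alpha)\right)^2}. \]
Proposition \ref{pro5} says this quotient is strictly larger than $\overline{\lambda}$ for every set of the form $(\alpha_\ast,1]$ or $[\alpha_\ast,1]$ with $\alpha_\ast>\alpha_+$, and also for $(\alpha_+,1]$ since $P(\{\alpha_+\})>0$. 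The only remaining possibility is $I_d=[\alpha_+,1]$, up to a $P$-null set. Hence $\tilde\psi=1$ $P$-a.e. In particular $\tilde\psi(\alpha_+)=1$, which is meaningful because $\{\alpha_+\}$ is an atom of $P$.

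\emph{Step 2 (from the blowup side).} By (\ref{scale1}) we have $\tilde w_k^{\alpha_+}(0)=w_k^{\alpha_+}(0)-w_k(0)$, since $2\log\sigma_k=-w_k(0)$ by (\ref{12.3}). We also have $\tilde w_k^{\alpha}\leq\tilde w_k^{\alpha}(0)$, because $x_k$ maximizes $v_k$ and $\alpha>0$. Under the contradiction hypothesis, $e^{\tilde w_k^{\alpha_+}}\rightarrow 0$ uniformly on $\tilde B_{\sigma_k^{-1}R}$. So in (\ref{scale3}) the atom at $\alpha_+$ contributes
\[ \lambda_k\alpha_+P(\{\alpha_+\})e^{\tilde w_k^{\alpha_+}}\rightarrow 0 \quad\mbox{locally uniformly,} \]
and it contributes nothing to $\tilde f$. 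In the construction of \cite{st18}, $\overline{\lambda}\,\tilde\psi(\alpha)$ is the local limit of the mass $\int e^{\tilde w_k^\alpha}$ in the rescaled variables, and $\tilde f=\overline{\lambda}\int_{I_+}\alpha\tilde\psi(\alpha)P(d\alpha)$ in total mass. This should give $\tilde\psi(\alpha_+)=0$, contradicting Step 1. Equivalently, $\beta_0\leq\overline{\lambda}\int_{(\alpha_+,1]}\alpha P(d\alpha)<\overline{\lambda}\int_{I_+}\alpha P(d\alpha)$, which contradicts $\tilde\psi\equiv 1$ in item 1 of Proposition \ref{pro9}.

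\emph{Main obstacle.} The hardest step is making Step 2 rigorous. One must identify $\tilde\psi(\alpha)$ with the rescaled mass of the $\alpha$-component, so that uniform decay of $e^{\tilde w_k^{\alpha_+}}$ forces $\tilde\psi(\alpha_+)=0$. This needs the precise definition of $\tilde\psi$ from \cite{st18}, namely as a weak-$\ast$ limit in $L^\infty(I_+,P)$ of the functions $\alpha\mapsto\lambda_k\int_{\tilde B_r}e^{\tilde w_k^\alpha}$, followed by $r\rightarrow\infty$. Because $\{\alpha_+\}$ is an atom, evaluating at $\alpha_+$ is legitimate: testing against $\chi_{\{\alpha_+\}}$ shows the limit vanishes there. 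I would first try to close the argument directly through this weak-$\ast$ limit. Failing that, I would use the integrated form above, which only needs $\beta_0\leq\lim_{r}\lim_k\int_{\tilde B_r}\tilde f_k$ together with the uniform smallness of the $\alpha_+$-term.
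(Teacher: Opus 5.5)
\textbf{Verdict.} Your argument is correct, but it takes a different route from the paper.

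\textbf{How the paper argues.} It assumes $\alpha_+<\alpha_{\inf}$ and excludes $\alpha_d>\alpha_{\inf}$ by items 2 and 4 of Proposition \ref{pro9} together with Proposition \ref{pro5}. In the case $\alpha_d\leq\alpha_{\inf}$ it invokes item 3, $\tilde\psi=\chi_{I_{\inf}}$. Item 4 then gives $\overline{\lambda}=8\pi P(I_{\inf})/(\int_{I_{\inf}}\alpha P(d\alpha))^2$, which contradicts Proposition \ref{pro5}. Finally it rules out $\alpha_+=\alpha_{\inf}\notin{\cal A}$ by the same device with $I_{\inf}=(\alpha_+,1]$.

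\textbf{How you argue.} You reduce at once to proving $\alpha_+\in{\cal A}$. You then use items 2 and 4 with Proposition \ref{pro5} to force $\tilde\psi=1$ $P$-a.e., without reference to ${\cal A}$. In place of item 3, which is the part of Proposition \ref{pro9} carrying the blowup information, you use a direct mass estimate.

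\textbf{Step 2.} Your ``integrated form'' is the right way to make Step 2 rigorous, and it needs nothing about how $\tilde\psi$ is built in \cite{st18}. The one ingredient you leave implicit is the bound $\int_{\tilde B_r}e^{\tilde w_k^\alpha}dx\leq 1+o(1)$ for every $\alpha$. It holds because this integral is the part of $e^{\alpha v_k}/\int_\Omega e^{\alpha v_k}$ lying near $x_k$, up to the conformal factor $e^{\xi}\rightarrow 1$. The atom term is at most $\lambda_k\alpha_+P(\{\alpha_+\})\pi r^2e^{\tilde w_k^{\alpha_+}(0)}\rightarrow 0$. By (\ref{conv}) you get $\int_{\tilde B_r}\tilde f\leq\overline{\lambda}\int_{(\alpha_+,1]}\alpha P(d\alpha)$ for every $r$. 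Hence $\beta_0<\overline{\lambda}\int_{I_+}\alpha P(d\alpha)$, which contradicts the value of $\beta_0$ that item 1 gives once $\tilde\psi=1$.

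\textbf{Further remarks.} The weak-$\ast$ identification of $\tilde\psi$ that you list as the main obstacle can therefore be dropped. Alternatively, item 3 closes Step 2 in one line. Step 1 yields $\alpha_d\leq\alpha_+\leq\alpha_{\inf}$, so item 3 applies, and $\chi_{I_{\inf}}$ vanishes at the atom $\alpha_+$ whenever $\alpha_+\notin{\cal A}$.

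\textbf{What each route buys.} Your route is self-contained: item 3 is replaced by a two-line estimate. The part of $P$ living in $(\alpha_+,\alpha_{\inf})$ never has to be controlled, because removing the atom already produces a strict mass deficit. Moreover, your Step 1 alone gives (\ref{19}) when $P(\{\alpha_+\})>0$, and that is exactly what Lemma \ref{lem4.1} is used for in the proof of Lemma \ref{lem4.2}. The paper's route is shorter once Proposition \ref{pro9} is taken as a black box. It also treats the atom at $\alpha_+$ and the gap below $\alpha_{\inf}$ uniformly through $I_{\inf}$.
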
 

{\it Proof:} \ It is obvious that $\alpha_+\leq \alpha_{\inf}$. We assume 
\begin{equation} 
\alpha_+<\alpha_{\inf} 
 \label{42.0}
\end{equation} 
to derive a contradiction. 

First, if $\alpha_d>\alpha_{\inf}$, the second and the fourth cases of Proposition \ref{pro9} imply 
\[ \overline{\lambda}=\frac{8\pi P(I_d)}{\left( \int_{I_d}\alpha P(d\alpha)\right)^2} \] 
and therefore, 
\begin{equation} 
\overline{\lambda}>\frac{8\pi}{\left( \int_{I_+}\alpha P(d\alpha)\right)^2} 
 \label{47.1}
\end{equation} 
by Proposition \ref{pro5}. The right-hand side of (\ref{47.1}), however, is equal to $\overline{\lambda}$ by Proposition \ref{pro3} and Propoisition \ref{pro4}. 

Second, if $\alpha_d\leq \alpha_{\inf}$, we have 
\begin{equation} 
\overline{\lambda}=\frac{8\pi P(I_{\inf})}{\left( \int_{I_{\inf}}\alpha P(d\alpha)\right)^2} 
 \label{47.2}
\end{equation}  
by the third and the fourth cases of Proposition \ref{pro9}. By (\ref{42.0}), therefore, we obtain 
\[ \overline{\lambda}>\frac{8\pi}{\left( \int_{I_+}\alpha P(d\alpha)\right)^2} \] 
by Proposition \ref{pro5}, a contradiction again. 

It thus holds that $\alpha_+=\alpha_{\inf}$, and therefore, if $\alpha_+\not\in {\cal A}$, we obtain $I_{\inf}=(\alpha_{\inf},1]=(\alpha_+, 1]$. This property implies a contradiction again by (\ref{47.2}) and Proposition \ref{pro5}, from the assumption $P(\{\alpha_+\})>0$. \hfill $\Box$ 

\begin{lemm}\label{lem4.2}
If (\ref{9.2}) holds, there arises that (\ref{19}) and 
\begin{equation} 
\alpha_+>\frac{4\pi}{\beta_0}, \ \beta_0=\frac{8\pi}{\int_{I_+}\alpha P(d\alpha)}. 
 \label{50.1}
\end{equation} 
\end{lemm}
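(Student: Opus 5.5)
Under (\ref{9.2}) we combine Proposition \ref{pro9} with the strict minimality of Proposition \ref{pro5}. The aim is to show that $\tilde\psi=1$ $P$-a.e. Once this is known, part 1 of Proposition \ref{pro9} gives (\ref{19}) at once. Part 4 then gives
\[ 8\pi=\overline{\lambda}\Bigl(\int_{I_+}\alpha P(d\alpha)\Bigr)^2, \]
and hence
\[ \beta_0=\overline{\lambda}\int_{I_+}\alpha P(d\alpha)=\frac{8\pi}{\int_{I_+}\alpha P(d\alpha)}. \]
The same identity also follows from Proposition \ref{pro3} and Proposition \ref{pro4}. The inequality $\alpha_+>4\pi/\beta_0$ is then equivalent to $\alpha_+>\frac12\int_{I_+}\alpha P(d\alpha)$. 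This holds because $\int_{I_+}\alpha P(d\alpha)\le 1<2\alpha_+$, using $\alpha_+>\frac12$.

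To prove $\tilde\psi=1$ a.e., take $\tilde\psi=\chi_{I_d}$ from part 2 and $\tilde\psi=\chi_{I_{\inf}}$ from part 3 when $\alpha_d\le\alpha_{\inf}$. Let $K$ denote the resulting interval, so $K=I_d$ or $K=I_{\inf}$. Part 4 shows that $P(K)>0$, since $\beta_0>0$ because $\tilde f\not\equiv 0$. It also gives
\[ \overline{\lambda}=\frac{8\pi P(K)}{\left(\int_K\alpha P(d\alpha)\right)^2}. \]
By Proposition \ref{pro3} and Proposition \ref{pro4}, $\overline{\lambda}$ is attained by $K_+=I_+$. Proposition \ref{pro5} states that any proper upper interval $K$ with $P(I_+\setminus K)>0$ gives a strictly larger value. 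Hence $P(I_+\setminus K)=0$, which is exactly $\tilde\psi=1$ $P$-a.e.

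The admissible sets in Proposition \ref{pro5} are $(\alpha_*,1]$ or $[\alpha_*,1]$ with $\alpha_*>\alpha_+$, and $(\alpha_+,1]$ when $P(\{\alpha_+\})>0$. If $K$ is of the form $[\alpha_*,1]$ or $(\alpha_*,1]$ with $\alpha_*\le\alpha_+$, then $P(I_+\setminus K)=0$ automatically, except for $K=(\alpha_+,1]$ with an atom at $\alpha_+$. That remaining case is covered by Proposition \ref{pro5}, and Lemma \ref{lem4.1} already excludes it in the $I_{\inf}$ branch. This completes the dichotomy.

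The step requiring care is making Proposition \ref{pro5} cover all endpoint cases. One must check that the proof of (\ref{claim}) really only needs $t=P(L_+)>0$ together with $\int_{L_+}\alpha P(d\alpha)\ge\alpha_+t$, where $L_+=I_+\setminus K$. Both hold for any Borel $K\subset I_+$ with $P(I_+\setminus K)>0$. I would therefore invoke Proposition \ref{pro5} in that general form, which removes the case analysis on the endpoints of $I_d$ and $I_{\inf}$. Once $\tilde\psi=1$ is established, (\ref{19}) follows, and by Proposition \ref{pro8} so do the residual vanishing and the mass identity.
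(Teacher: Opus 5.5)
Your proof is correct, and it reaches (\ref{19}) by a more streamlined route than the paper's. Both arguments use the same three inputs: parts 1, 2 and 4 of Proposition \ref{pro9}, the attainment of $\overline{\lambda}$ at $K_+=I_+$ from Propositions \ref{pro3}--\ref{pro4}, and the strict inequality of Proposition \ref{pro5}.

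The paper splits according to whether $P(\{\alpha_+\})=0$ or $P(\{\alpha_+\})>0$. In the non-atomic case it excludes $\alpha_d>\alpha_+$ and then checks (\ref{50.3}) by hand. In the atomic case it excludes $\alpha_d>\alpha_{\inf}$ and passes to $\tilde\psi=\chi_{I_{\inf}}$ via part 3 of Proposition \ref{pro9}. It then needs Lemma \ref{lem4.1} ($\alpha_+=\alpha_{\inf}\in{\cal A}$) to conclude $I_{\inf}=[\alpha_+,1]$.

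You instead observe that the computation behind (\ref{claim}) uses only $P(K)>0$, $t=P(I_+\setminus K)>0$, $\int_K\alpha P(d\alpha)\le P(K)$ and $\int_{I_+\setminus K}\alpha P(d\alpha)\ge\alpha_+t$. It therefore holds for every Borel $K\subset I_+$, and applied to $K=I_d$ it forces $P(I_+\setminus I_d)=0$ in one step. For this lemma, that makes $\alpha_{\inf}$, ${\cal A}$, part 3 of Proposition \ref{pro9} and Lemma \ref{lem4.1} unnecessary, so your $I_{\inf}$ branch and your third paragraph can simply be dropped.

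Your route also makes explicit the nondegeneracy $P(I_d)>0$, which the paper uses tacitly when it divides in part 4. That positivity comes from part 1, $\beta_0=\overline{\lambda}\int_{I_d}\alpha P(d\alpha)>0$, not from part 4 as you wrote.

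Running your computation with $\tilde\psi$ and $1-\tilde\psi$ in place of $\chi_K$ and $\chi_{I_+\setminus K}$ shows that part 2 of Proposition \ref{pro9} is not needed either. With $b=\int\tilde\psi P(d\alpha)$, $a=\int\alpha\tilde\psi P(d\alpha)$ and $t=\int(1-\tilde\psi)P(d\alpha)$, the same inequalities hold and give $t=0$.

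The paper's detour keeps the argument inside the $\alpha_{\inf}$ framework of Proposition \ref{pro9}. It produces $\alpha_+=\alpha_{\inf}\in{\cal A}$ as a by-product in the atomic case, but the statement of Lemma \ref{lem4.2} does not require this.
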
 

{\it Proof:} \ We have 
\[ \overline{\lambda}=\frac{8\pi}{\left( \int_{I_+}\alpha P(d\alpha)\right)^2} \] 
by Proposition \ref{pro3} and Proposition \ref{pro4}, and therefore, (\ref{19}) implies the second case of (\ref{50.1}). Then it holds that 
\[ \frac{4\pi}{\beta_0}=\frac{1}{2}\int_{I_+}\alpha P(d\alpha)\leq \frac{1}{2}, \] 
and therefore, the first case is a consequence of $\alpha_+>\frac{1}{2}$. It thus suffices to prove (\ref{19}). 

Assume, first, 
\begin{equation} 
P(\{\alpha_+\})=0. 
 \label{p1}
\end{equation} 
If $\alpha_d>\alpha_+$, we obtain  
\[ \frac{8\pi}{\left( \int_{I_+}\alpha P(d\alpha)\right)^2}=\overline{\lambda}<\frac{8\pi P(I_d)}{\left( \int_{I_d}\alpha P(d\alpha)\right)^2} \] 
by Proposition \ref{pro3}, Proposition \ref{pro4}, and Proposition \ref{pro5}, while the second and the fourth cases of Proposition \ref{pro9} ensure 
\[ 8\pi \int_{I_d}P(d\alpha)=\overline{\lambda}\left( \int_{I_d}\alpha P(d\alpha)\right)^2, \] 
a contradiction. 

It thus holds that $\alpha_d\leq \alpha_+$. Then the first and the second cases of Proposition \ref{pro9} imply 
\begin{equation} 
\beta_0=\overline{\lambda}
\int_{I_+}\alpha \tilde \psi(\alpha)P(d\alpha)
=\overline{\lambda}
\int_{I_d}\alpha P(d\alpha). 
 \label{50.2}
\end{equation} 
Here we confirm  
\begin{equation}  
\int_{I_d}\alpha P(d\alpha)=\int_{I_+}\alpha P(d\alpha), 
 \label{50.3}
\end{equation} 
In fact, inquality (\ref{50.3})  is obvious if $\alpha_d<\alpha_+$ or $\alpha_d=\alpha_+$ and $I_d=(\alpha_d,1]$. Equality (\ref{50.3}) is still valid in the rest case of $\alpha_+=\alpha_d$ and $I_d=[\alpha_d,1]$ from the assumption (\ref{p1}). We thus obtain (\ref{19}) by (\ref{50.2})-(\ref{50.3}) in the case of (\ref{p1}). 

Assume, second, 
\begin{equation} 
P(\{\alpha_+\})>0. 
 \label{p2}
\end{equation}
If $\alpha_d>\alpha_{\inf}$ we obtain a contradiction similarly to the first case of (\ref{p1}), because $\alpha_d>\alpha_+$ arises in this case by $\alpha_{\inf}\geq \alpha_+$. Thus we obtain $\alpha_d\leq \alpha_{\inf}$, which ensures (\ref{47.7}) by the third case of Proposition \ref{pro9}. It holds also that $\alpha_+=\alpha_{\inf}\in {\cal A}$ by Lemma \ref{lem4.1}, which is valid in the case of (\ref{p2}), and therefore, 
\[ I_{\inf}=[\alpha_+,1]. \] 
Then, the first case of Proposition \ref{pro9} ensures 
\[ \beta_0=\overline{\lambda}\int_{I_{\inf}}\alpha P(d\alpha)=\overline{\lambda}\int_{I_+}\alpha P(d\alpha), \] 
that is, (\ref{19}) even in the second case of (\ref{p2}). \hfill $\Box$ 

\vspace{2mm} 

Having (\ref{19}) and the second relation of (\ref{50.1}),  we can show the following fact by the proof of Theorem 1.2 of \cite{st18}. In fact, there, the condition $P(\alpha_+)>0$ is used only in the proof of Proposition 3.1, which is replaced by Lemma \ref{lem4.2} in our case.  

\begin{prop}\label{pro10}
Let $\mbox{supp} \ P\subset I_+$, and assume $s^+=0$ in (\ref{rv}), (\ref{19}), and $\alpha_+>\frac{4\pi}{\beta_0}$. Then it holds that (\ref{9}) with $\beta_k\rightarrow \beta_0$ satisfying 
\begin{equation} 
\beta_k\leq \lambda_k\int_{I_+}\alpha P(d\alpha). 
 \label{p3}
\end{equation} 
\end{prop}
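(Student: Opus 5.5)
The plan is to follow the Y.Y. Li / C.S. Lin scheme, as adapted in the proof of Theorem 1.2 of \cite{st18}, in the rescaled variables (\ref{scale1})--(\ref{scale3}). First I would use $s^+=0$ and (\ref{19}) through Proposition \ref{pro8}. This gives ${\cal S}=\{x_0\}$ together with the mass identity (\ref{7.1}), and hence the tail vanishing (\ref{rr}): the mass of $\tilde f_k$ outside $\tilde B_r$, up to $\tilde B_{\sigma_k^{-1}R}$, tends to $0$ as $k\to\infty$ and then $r\to\infty$. I would then set
\[
\beta_k(r)\equiv\int_{B_{r}(x_k)}f_k ,
\]
with the radius chosen to be $R$ in original variables. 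This gives $\beta_k\le\langle 1,f_kdx\rangle=\lambda_k\int_{I_+}\alpha P(d\alpha)$, which is (\ref{p3}). Also $\beta_k\to n^+(x_0)=\beta_0$ by (\ref{7}), ${\cal S}=\{x_0\}$, $s^+=0$ and (\ref{7.1}).

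Next, I would represent $v_k$ via the Green function of (\ref{x10}) on $B_R(x_0)$:
\[
v_k(x)=\int_\Omega G(x,y)f_k(y)\,dy+O(1),
\]
where $G(x,y)=\frac{1}{2\pi}\log\frac{1}{d(x,y)}+O(1)$. This reduces (\ref{9}) to comparing $\frac{1}{2\pi}\int \log\frac{d(x_k,y)}{d(x,y)}f_k(y)dy$ with $-\frac{\beta_k}{2\pi}\log(1+\sigma_k^{-1}|x-x_k|)$. I would split according to $|x-x_k|\le r\sigma_k$ and $|x-x_k|\ge r\sigma_k$. The inner region is handled by the $C^2_{loc}$ convergence (\ref{conv}), with $\tilde w$ regular and bounded on compacts. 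In the outer region, the decomposition of $f_k$ into the part on $B_{r\sigma_k}(x_k)$ gives $-\frac{\beta_k}{2\pi}\log(\sigma_k^{-1}|x-x_k|)+o(1)$. The remaining part must contribute only $O(1)$.

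The main obstacle is this $O(1)$ bound for the tail contribution $\int_{r\sigma_k<|y-x_k|<R}\log\frac{|y-x_k|}{|x-y|}f_k(y)dy$. Smallness of its mass (\ref{rr}) alone does not control the log-singularity near $y=x$. Here I would use the hypothesis $\alpha_+>\frac{4\pi}{\beta_0}$. A first crude bound from the Green representation and (\ref{rr}) gives, for any $\varepsilon>0$,
\[
\tilde w_k(x)\le-(\beta_0/2\pi-\varepsilon)\log|x|+C
\]
for $r\le|x|\le\sigma_k^{-1}R$; this is obtained by a Harnack / $\sup+\inf$ type argument, or by Brezis--Merle applied on dyadic annuli. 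Then for every $\alpha\in\mbox{supp}\,P$,
\[
e^{\tilde w_k^\alpha}\le C|x|^{-\alpha(\beta_0/2\pi-\varepsilon)},
\]
using $\tilde w_k^\alpha\le \alpha\tilde w_k+C$, which follows from (\ref{xx12}) and Jensen. Since $\alpha\beta_0/2\pi\ge\alpha_+\beta_0/2\pi>2$, this gives $\tilde f_k(x)\le C|x|^{-2-\delta}$ uniformly in the tail. That decay turns the tail integral into a bounded quantity, including near $y=x$, because $f_k$ is then pointwise small relative to $|x-x_k|^{-2}$.

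Combining the inner and outer estimates yields (\ref{9}) with $\beta_k\to\beta_0$ and (\ref{p3}). This follows the proof of Theorem 1.2 in \cite{st18}, with Proposition 3.1 there replaced by Lemma \ref{lem4.2}.
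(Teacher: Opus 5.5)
\textbf{Verdict: the route is the paper's, but one step fails as written.} The paper's proof simply invokes the proof of Theorem 1.2 of \cite{st18}, with Lemma \ref{lem4.2} replacing Proposition 3.1 there. Your outline reconstructs that Li--Lin scheme, and you correctly locate where $\alpha_+>4\pi/\beta_0$ enters: through $\tilde f_k\le\lambda_k e^{\alpha_+\tilde w_k}$ and the resulting decay $\tilde f_k\le C|x|^{-2-\delta}$.

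The failing step is the outer-region bookkeeping, which is wrong in two places that happen to cancel. Put $X=\sigma_k^{-1}(x-x_k)$ and $\tilde\beta_k(r)=\int_{\tilde B_r}\tilde f_k$.
\begin{itemize}
\item The mass in $\tilde B_r$ contributes $-\frac{\tilde\beta_k(r)}{2\pi}\log|X|+O_r(1)$, not $-\frac{\beta_k}{2\pi}\log|X|+o(1)$. For fixed $r$, $\beta_k-\tilde\beta_k(r)\to\int_{{\bf R}^2\setminus \tilde B_r}\tilde f>0$, while $\log|X|$ reaches $\log(\sigma_k^{-1}R)\to\infty$.
\item Correspondingly, the tail integral is not bounded, even with the decay. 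For $r\le|y|\le|X|/2$ one has $\log\frac{|y|}{|X-y|}=-\log\frac{|X|}{|y|}+O(1)$. So the tail contributes $-\frac{1}{2\pi}\bigl(\int_{r\le|y|\le|X|/2}\tilde f_k\bigr)\log|X|+O(1)$. This is unbounded as $|X|\to\infty$, because $\int_{r\le|y|\le2r}\tilde f_k\to\int_{r\le|y|\le2r}\tilde f>0$.
\item The decay only controls the part of the tail near $y=X$ and beyond.
\end{itemize}

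\textbf{The fix.} What your decay estimate actually gives, and what is needed, is
\[
\int_{\tilde B_{\sigma_k^{-1}R}}\log\frac{|X|\,|y|}{|X-y|}\,\tilde f_k(y)\,dy=O(1)\qquad\mbox{uniformly in } r\le|X|\le\sigma_k^{-1}R .
\]
You can prove this by splitting the domain into three regions:
\begin{itemize}
\item $|y|\le|X|/2$: the integrand is $\log|y|+O(1)$, and $\int_{|y|\ge r}\log|y|\,|y|^{-2-\delta}dy<\infty$.
\item $|y-X|\le|X|/2$: here $\tilde f_k\le C|X|^{-2-\delta}$ absorbs both $\log|X|$ and the integrable singularity.
\item The rest: the integrand is $\log|X|+O(1)$ and the mass is at most $C|X|^{-\delta}$.
\end{itemize}
Then use $\sup_{s\ge1}s^{-\delta}\log s<\infty$. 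This is exactly why $\beta_k$ must be the full local mass $\int_{B_R(x_k)}f_k$, and it yields both sides of (\ref{9}).

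\textbf{Two smaller points.}
\begin{itemize}
\item The inequality $\tilde w_k^\alpha\le\alpha\tilde w_k$ holds because $\tilde w_k^\alpha-\alpha\tilde w_k$ is the constant $w_k^\alpha(0)-w_k(0)$. This constant is $\le0$ by the monotonicity in $\alpha$ shown at the start of \S\ref{sec4}. Jensen only gives a lower bound for it.
\item The crude bound is where the real work lies, and it needs more than (\ref{rr}) plus the Green representation. Those give $\tilde w_k(X)\le-\frac{\tilde\beta_k(r)}{2\pi}\log|X|+C$ plus the nonnegative local term $\frac{1}{2\pi}\int_{|y-X|<|X|/2}\log\frac{|X|}{|X-y|}\tilde f_k(y)\,dy$. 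Small mass alone does not bound this term. You need a pointwise statement such as $|X|^2\tilde f_k(X)\le C$ on $r\le|X|\le\sigma_k^{-1}R$. One way is a selection (no secondary bubbling) argument that contradicts (\ref{rr}). Another is Brezis--Merle on $B(X,|X|/2)$ combined with $\alpha_+>4\pi/\beta_0$.
\end{itemize}
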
  

Inequality (\ref{p3}) ensures $\beta_k\leq \beta_0$ as is desired. 
Now we complete the following proof. 

\vspace{2mm} 

{\it Proof of Theorem \ref{thm2}:} \ We can argue similarly to \cite{st19}, having confirmed (\ref{9})-(\ref{14.2}) 
and (\ref{50.1}).  Here we just draw a sketch of the proof. 

In fact we have 
\begin{eqnarray} 
& & \frac{2}{\lambda_k}J_{\lambda_k}(v_k)=\iint_{I_+\times \Omega}e^{w_k^\alpha}\left(w_k^\alpha-w_k^\alpha(0)\right)dxP(d\alpha) \nonumber\\ 
& & \quad +\int_{I_+}\left(w_k^\alpha(0)-\log \int_\Omega e^{\alpha v_k}\right) P(d\alpha). 
 \label{54.1}
\end{eqnarray} 
For the first term on the right-hand side of (\ref{54.1}) we use (\ref{9}), regarding 
\[ \alpha_+>\frac{4\pi}{\beta_0} \] 
and (\ref{14.2}) 
as in (4.24) of \cite{st19}. For the second term on the right-hand side of (\ref{54.1}), we use (\ref{9}) with  
\[ \beta_0=\frac{8\pi}{\int_{I_+}\alpha P(d\alpha)} \] 
and (\ref{14.2}).  Then (\ref{bounded}) follows. \hfill $\Box$

\section{Two-sided measures: proof of Theorem \ref{thm3}}\label{sec5}

Here we assume 
\begin{equation} 
I_\pm\cap \mbox{supp} \ P\neq\emptyset  
 \label{57.1}
\end{equation} 
and take $(v_k, \lambda_k)$, $k=1,2,\cdots$, as in (\ref{3.7}) and (\ref{111}). There still arises that (\ref{4})-(\ref{6}). 

\begin{lemm}\label{lem5.1}
Any $x_0\in {\cal S}$ admits either $n^+(x_0)\geq 8\pi$ or $n^-(x_0)\geq 8\pi$.  If 
$n^{+}(x_0)\geq 8\pi$ and 
\begin{equation} 
\int_{I_+}\alpha P(d\alpha)>\frac{P(I_+)}{1+\frac{\sqrt{5}}{2}} 
 \label{para3}
\end{equation} 
it holds that either $x_0\not \in {\cal S}_{-}$ or $n^{-}(x_0)\geq 24\pi$. If $n^-(x_0)\geq 8\pi$ and 
\begin{equation} 
\int_{I_-}\vert \alpha\vert P(d\alpha)>\frac{P(I_-)}{1+\frac{\sqrt{5}}{2}}, 
 \label{para4}
\end{equation} 
similarly, it holds that either $x_0\not\in {\cal S}_+$ or $n^+(x_0)\geq 24\pi$. 
\end{lemm}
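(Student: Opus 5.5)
The plan is to work entirely at a single blowup point $x_0\in{\cal S}$, using the lower bound (\ref{5}) and the quadratic identity (\ref{6}). Put
\[ m^\pm=\int_{I_\pm}m(x_0,\alpha)P(d\alpha), \qquad n^\pm=n^\pm(x_0)=\int_{I_\pm}\vert\alpha\vert m(x_0,\alpha)P(d\alpha). \]
Since $\vert\alpha\vert\leq 1$ we have $0\leq n^\pm\leq m^\pm$, and (\ref{6}) reads
\[ 8\pi(m^++m^-)=(n^+-n^-)^2 . \]

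\emph{First assertion.} Without loss of generality assume $n^+\geq n^-$. Then
\[ 8\pi(n^++n^-)\leq 8\pi(m^++m^-)=(n^+-n^-)^2\leq (n^+)^2 . \]
Since $x_0\in{\cal S}_+\cup{\cal S}_-$, (\ref{5}) gives $\max\{n^+,n^-\}=n^+\geq 4\pi>0$. Dividing by $n^+$ yields $8\pi\leq n^+$. The case $n^-\geq n^+$ is symmetric and gives $n^-\geq 8\pi$. This step is routine.

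\emph{Second assertion.} Assume $n^+\geq 8\pi$ and $x_0\in{\cal S}_-$, so that $n^-\geq 4\pi$ by (\ref{5}). The goal is to show $n^-\geq 24\pi$. The hypothesis (\ref{para3}) only becomes usable once $m^+$ can be compared with $n^+$. For this I would invoke the local structure coming from the blowup analysis of \cite{ors10}: at a blowup point, $m(x_0,\cdot)$ is constant $P$-a.e. on each of $I_+$ and $I_-$. This is the analogue of item 3 of Proposition \ref{pro4}, obtained by localizing the argument there to $B_R(x_0)$ and to each sign separately. Writing $m(x_0,\cdot)=c_\pm$ on $I_\pm$, we get
\[ m^\pm=c_\pm P(I_\pm), \qquad n^\pm=c_\pm\int_{I_\pm}\vert\alpha\vert P(d\alpha). \]
Then (\ref{para3}) becomes
\[ m^+<\Bigl(1+\frac{\sqrt5}{2}\Bigr)n^+ . \]

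Next I would use the case hypothesis $n^+\geq 8\pi$. By the first step this case should correspond to $n^+>n^-$; otherwise the symmetric argument applies. Inserting the bound on $m^+$ and $m^-\geq n^-$ into (\ref{6}) gives
\[ (n^+-n^-)^2<8\pi\Bigl(1+\frac{\sqrt5}{2}\Bigr)n^++8\pi m^- . \]
This has to be combined with the lower bound from (\ref{6}) and $n^-\geq 4\pi$. I would then analyse the resulting quadratic inequality in $(n^+,n^-)$ on the region $n^+\geq 8\pi$, $4\pi\leq n^-<24\pi$, and show that it has no admissible solution. The constant $1+\frac{\sqrt5}{2}$ should be exactly the threshold at which the discriminant of this quadratic changes sign. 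The third assertion follows by the symmetry $\alpha\mapsto-\alpha$, $v\mapsto -v$.

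\emph{Main obstacle.} I expect the hard part to be this second step, for two reasons. First, I need a justification that $m(x_0,\cdot)$ is sign-wise constant. Second, (\ref{6}) together with $m^-\geq n^-$ does not by itself bound $m^-$ from above. If the quadratic analysis does not close, I would first try to obtain a sign-wise refinement of (\ref{6}), namely a Pohozaev-type identity on $B_R(x_0)$ that separates the $I_+$ and $I_-$ contributions. Failing that, I would use (\ref{para3}) a second time together with the symmetric estimate for $m^-$ whenever (\ref{para4}) also holds.
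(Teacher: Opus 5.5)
Your first assertion is correct. For $n^+\ge n^-$, the chain $8\pi n^+\le 8\pi(n^++n^-)\le (n^+-n^-)^2\le (n^+)^2$ is a tidy replacement for the paper's parabola picture.

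The second assertion has a genuine gap, in two layers.

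First, the sign-wise constancy of $m(x_0,\cdot)$ is not available. Item 3 of Proposition \ref{pro4} does not come from localizing to $B_R(x_0)$. It comes from the global balance $\int_\Omega\mu_k=\lambda_kP(d\alpha)$, using residual vanishing and $\sharp{\cal S}=1$. It fails in general. Take $P=\tau\delta_1+(1-\tau)\delta_\gamma$ with $\gamma<\frac{\sqrt\tau}{1+\sqrt\tau}$. At any blowup point, constancy together with (\ref{6}) would force $m(x_0,\cdot)\equiv 8\pi(\tau+(1-\tau)\gamma)^{-2}>8\pi/\tau=\overline\lambda$. That exceeds the mass $\overline\lambda$ available for each $\alpha$.

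Second, and more fundamentally, no argument using only (\ref{5}), (\ref{6}), $n^\pm\le m^\pm$ and (\ref{para3}) can close, even if constancy is granted. Consider $(n^+,n^-,m^+,m^-)=(20\pi,4\pi,20\pi,12\pi)$. These values satisfy (\ref{5}), (\ref{6}), $n^\pm\le m^\pm$, $n^+\ge 8\pi$ and $m^+<(1+\frac{\sqrt5}{2})n^+$. They are also compatible with constancy and (\ref{para3}): let $P$ restricted to $I_+$ be an atom at $\alpha=1$, and let $\int_{I_-}\vert\alpha\vert P(d\alpha)=\frac13P(I_-)$. Yet $n^-=4\pi<24\pi$. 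So your quadratic analysis cannot succeed, and $1+\frac{\sqrt5}{2}$ is not a discriminant threshold.

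The missing ingredient is a global upper bound on $n^+(x_0)$. Test (\ref{4}) with the continuous function $\max\{\alpha,0\}$ and use $\iint_{\Omega\times I_+}\alpha\,\mu_k=\lambda_k\int_{I_+}\alpha P(d\alpha)$. This gives $n^+(x_0)\le\overline\lambda\int_{I_+}\alpha P(d\alpha)$. Choosing $K_+=I_+\cap\mbox{supp}\,P$ in (\ref{3}) gives $\overline\lambda\le 8\pi P(I_+)/(\int_{I_+}\alpha P(d\alpha))^2$. Under (\ref{para3}) these combine to
\[ n^+(x_0)\le\frac{8\pi P(I_+)}{\int_{I_+}\alpha P(d\alpha)}<8\pi\Bigl(1+\frac{\sqrt5}{2}\Bigr)=(8+4\sqrt5)\pi . \]
Then split on the sign of $n^+-n^-$, as the paper does.

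\emph{Case $n^+>n^-$ and $x_0\in{\cal S}_-$.} Here $n^-\ge 4\pi$. Solving $(n^+-n^-)^2\ge 8\pi(n^++n^-)$ for $n^+-n^->0$ gives $n^+\ge n^-+4\pi+4\sqrt{\pi^2+\pi n^-}\ge(8+4\sqrt5)\pi$. This contradicts the display, so in this branch the conclusion is $x_0\notin{\cal S}_-$, not $n^-\ge 24\pi$.

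\emph{Case $n^->n^+\ge 8\pi$.} The same computation with roles exchanged gives $n^-\ge n^++4\pi+4\sqrt{\pi^2+\pi n^+}\ge 24\pi$ directly, without (\ref{para3}). This is where $24\pi$ comes from, and it should replace your appeal to a ``symmetric argument''.

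This is exactly the paper's route. The constant $1+\frac{\sqrt5}{2}$ is simply $(8+4\sqrt5)\pi/8\pi$, the value of $X_+$ on the upper branch of $8\pi(X_++X_-)=(X_+-X_-)^2$ at $X_-=4\pi$, divided by $8\pi$.
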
 

To make the statement simple, we say that $n^\pm(x_0)\geq 8\pi$ and 
\begin{equation} 
\int_{I_\pm}\vert \alpha \vert P(d\alpha)>\frac{P(I_\pm)}{1+\frac{\sqrt{5}}{2}} 
 \label{root5}
\end{equation} 
imply either $x_0\not\in {\cal S}_\mp$ or $n^\mp(x_0)\geq 24\pi$ for the second and the third assertions of Lemma \ref{lem5.1}.  Similar descriptions are also used below. 

\vspace{2mm} 

{\it Proof of Lemma \ref{lem5.1}:} \ It holds that  
\begin{eqnarray*}  
m^\pm(x_0) & = &\int_{I_\pm}m(\alpha, x_0)P(d\alpha) \\ 
& \geq & n^\pm(x_0)=\int_{I_\pm}\vert \alpha\vert m(\alpha, x_0)P(d\alpha), \ x_0\in {\cal S}_\pm.  
\end{eqnarray*}
Put  
\begin{eqnarray*} 
& & n^+(x_0)=m^+(x_0)=0, \quad x_0\not \in {\cal S}_+ \\ 
& & n^-(x_0)=m^-(x_0)=0, \quad x_0\not\in {\cal S}_-
\end{eqnarray*} 
for simplicity. 
Then we obtain 
\begin{eqnarray} 
8\pi(n^+(x_0)+n^-(x_0)) & \leq & 8 \pi (m^+(x_0)+m^-(x_0)) \nonumber\\ 
& = & (n^+(x_0)-n_-(x_0))^2 \label{60} 
\end{eqnarray}  
by (\ref{6}). Here the curve 
\[ 8\pi(X_++X_-)=(X_+-X_-)^2 \] 
in $X_-X_+$plane is a parabola passing through $(0,8\pi)$, $(0,0)$, and $(8\pi,0)$, as is drawn in a figure in \cite{os06} p.287 or \cite{rt16} p. 152.  

Since 
\[ (n^-(x_0), n^+(x_0))\neq (0,0), \ \forall x_0\in {\cal S}, \] 
the value $(n^-(x_0), n^+(x_0))$ belongs to one of the two components $Z_\pm$ in $X_-X_+$ plane, defined by  
\[ Z_\pm=\{ (X_-,X_+) \mid 8\pi(X_++X_-)\leq (X_+-X_-)^2, \ X_\pm>X_\mp\} \setminus \{0,0\}. \] 
In particular, we have either $n^+(x_0)\geq 8\pi$ or $n^-(x_0)\geq 8\pi$. 

Assume $n^+(x_0)\geq 8\pi$, and let $(n^-(x_0), n^+(x_0))\in Z_+$. Suppose, furthermore, $x_0\in {\cal S}_-$. Then, it follows that 
\[ n^-(x_0)\geq 4\pi, \] 
and therefore,   
\begin{equation} 
n^+(x_0)\geq (8+4\sqrt{5})\pi. 
 \label{parabola1}
\end{equation} 

For $\mu_k(dxd\alpha)$ define by (\ref{muk}), on the other hand, we obtain 
\begin{eqnarray*} 
& & \iint_{\Omega\times I_+}\alpha\mu_k(dxd\alpha)= \lambda_k\int_{I_+}\alpha P(d\alpha) \\ 
& & \quad \rightarrow \int_{I_+}\alpha\left( \sum_{x_0'\in {\cal S}}m(x_0',\alpha)+\Vert s_1\Vert_1\right)P(d\alpha) \geq n^+(x_0), 
\end{eqnarray*} 
and therefore, 
\begin{equation} 
\overline{\lambda}\int_{I_+}\alpha P(d\alpha)\geq 8\pi\left(1+\frac{\sqrt{5}}{2}\right). 
 \label{para2}
\end{equation} 
Since 
\begin{equation} 
\overline{\lambda}\leq \frac{8\pi P(I_\pm)}{\left(\int_{I_\pm}\alpha P(d\alpha)\right)^2} 
 \label{pm+}
\end{equation} 
inequality (\ref{para2}) is impossible by (\ref{para3}). Thus we obtain $x_0\not\in {\cal S}_-$ in this case. 

In  the other case of $(n^-(x_0), n^+(x_0))\in Z_-$ we obtain $n^-(x_0)\geq 24\pi$ by $n^+(x_0)\geq 8\pi$. Similarly, if $n^-(x_0)\geq 8\pi$, we obtain either $x_0\not\in {\cal S}_+$ or $n^+(x_0)\geq 24\pi$. \hfill $\Box$ 

\begin{lemm}\label{lem5.3}
Under the assumption of (\ref{root5}) if 
\begin{equation} 
n^\pm(x_0)\geq 8\pi, \ x_0\in {\cal S}_\pm, 
 \label{58.1}
\end{equation} 
it holds that $x_0\not\in {\cal S}_\mp$, provided that  
\begin{equation} 
P(I_\mp)<3\int_{I_\mp}\vert \alpha \vert P(d\alpha). 
 \label{58.2}
\end{equation}  
\end{lemm}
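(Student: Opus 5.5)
We argue by contradiction, combining the dichotomy of Lemma \ref{lem5.1} with the total mass bound and the upper bound (\ref{pm+}) for $\overline{\lambda}$. We treat the upper sign; the lower sign is symmetric. So we assume (\ref{root5}) for $I_+$, $n^+(x_0)\geq 8\pi$ with $x_0\in {\cal S}_+$, and (\ref{58.2}) for $I_-$. We suppose that $x_0\in {\cal S}_-$ and aim for a contradiction.

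First, the second assertion of Lemma \ref{lem5.1} applies, since $n^+(x_0)\geq 8\pi$ and (\ref{para3}) holds. Because $x_0\in {\cal S}_-$, the alternative $x_0\not\in {\cal S}_-$ is excluded, and we obtain
\[ n^-(x_0)\geq 24\pi. \]

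Second, we repeat the mass computation from the proof of Lemma \ref{lem5.1}, now on the negative side. Testing $\mu_k$ in (\ref{muk}) against $\vert\alpha\vert\chi_{I_-}(\alpha)$ gives
\[ \lambda_k\int_{I_-}\vert \alpha\vert P(d\alpha)=\iint_{\Omega\times I_-}\vert\alpha\vert \mu_k(dxd\alpha). \]
By (\ref{4}) the right-hand side converges to
\[ \int_{I_-}\vert\alpha\vert\Bigl(\sum_{x_0'\in{\cal S}}m(x_0',\alpha)\Bigr)P(d\alpha)+\Vert \tilde s^-\Vert_1 . \]
This limit is at least $n^-(x_0)$, because $m\geq 0$ and $r\geq 0$. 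Hence
\[ \overline{\lambda}\int_{I_-}\vert\alpha\vert P(d\alpha)\geq n^-(x_0)\geq 24\pi. \]

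Third, we insert the bound (\ref{pm+}) with the lower sign, $\overline{\lambda}\leq 8\pi P(I_-)\big/\bigl(\int_{I_-}\vert\alpha\vert P(d\alpha)\bigr)^2$. This bound follows from (\ref{3}) by taking $K_-=I_-\cap\mbox{supp}\,P$. Note that $\int_{I_-}\vert\alpha\vert P(d\alpha)>0$ by (\ref{57.1}) and (\ref{para4}). Combining the two inequalities and cancelling one factor of $\int_{I_-}\vert\alpha\vert P(d\alpha)$ gives
\[ \frac{8\pi P(I_-)}{\int_{I_-}\vert\alpha\vert P(d\alpha)}\geq 24\pi, \quad\mbox{that is,}\quad P(I_-)\geq 3\int_{I_-}\vert\alpha\vert P(d\alpha). \]
This contradicts (\ref{58.2}), so $x_0\not\in{\cal S}_-$.

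The only step that needs care is the weak-limit inequality in the second step. The test function $\vert\alpha\vert\chi_{I_-}$ is discontinuous at $\alpha=0$, while the convergence (\ref{4}) is tested against $C(\Omega\times I)$. Since $\vert\alpha\vert\chi_{I_-}$ vanishes at $\alpha=0$, it coincides with the continuous function $\max\{-\alpha,0\}$ on $I$. This is the same passage to the limit already used for (\ref{para2}), so it causes no trouble. The reversed-sign case follows by exchanging the roles of $I_+$ and $I_-$.
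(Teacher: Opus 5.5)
Your proof is correct and is the paper's argument: Lemma \ref{lem5.1} forces $n^\mp(x_0)\geq 24\pi$, the mass limit from (\ref{4}) gives $\overline{\lambda}\int_{I_\mp}\vert\alpha\vert P(d\alpha)\geq 24\pi$, and (\ref{pm+}) then contradicts (\ref{58.2}). One small slip: in the upper-sign case (\ref{para4}) is not a hypothesis, but the positivity of $\int_{I_-}\vert\alpha\vert P(d\alpha)$ follows directly from (\ref{58.2}) or from your second step, so nothing is lost.
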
 

{\it Proof:} \ From (\ref{root5}) and (\ref{58.1}) we obtain either ${\cal S}_\mp=\emptyset$ or $n^{\mp}(x_0)\geq 24\pi$ by Lemma \ref{lem5.1}. In the latter case we use (\ref{4}) and (\ref{mu})-(\ref{muk}), to obtain 
\begin{eqnarray*}    
& & \lambda_k\int_{I_\mp}\vert \alpha \vert P(d\alpha)=\lambda_k\iint_{\Omega\times I_\mp}\vert \alpha  \vert \mu_k(dxd\alpha) \\ 
& & \quad \rightarrow \iint_{I_\mp\times \Omega}\vert \alpha\vert \mu(dxd\alpha)\geq \int_{I_\mp}\vert \alpha \vert m^{\mp}(x_0,\alpha) P(d\alpha)=n^{\mp}(x_0)\geq 24\pi  
\end{eqnarray*} 
or, 
\begin{equation}  
\overline{\lambda}\geq \frac{24\pi}{\int_{I_\mp}\vert \alpha \vert P(d\alpha)}. 
 \label{58.3}
\end{equation} 
Inequality (\ref{58.3}), however, is impossible because (\ref{pm+}) and (\ref{58.2}) imply 
\[ \overline{\lambda}\leq \frac{8\pi P(I_\mp)}{\left( \int_{I_\mp}\vert \alpha \vert P(d\alpha)\right)^2}<\frac{24\pi}{\int_{I_\mp}\vert \alpha\vert P(d\alpha)}. \] 
\hfill $\Box$ 

\begin{lemm}\label{lem5.4}
Assume, both 
\begin{equation} 
\alpha_\pm>\frac{1}{2},   
 \label{pm}
\end{equation} 
and (\ref{111}) in (\ref{3.7}). Then it holds that $\sharp {\cal S}_\pm\leq 1$. Each $x_0\in {\cal S}$ admits either 
\begin{equation} 
x_0\in {\cal S}_+\setminus {\cal S}_-, \ n^+(x_0)\geq 8\pi 
 \label{64.1}
\end{equation} 
or 
\begin{equation} 
x_0\in {\cal S}_-\setminus {\cal S}_+, \ n^-(x_0)\geq 8\pi. 
 \label{64.2}
\end{equation} 
In each case there arises also that $s^\pm=0$, although $s^\mp\not\equiv 0$ can happen, and $\overline{\lambda}$ in (\ref{3}) is attained by $K_\pm=I_\pm$. 
\end{lemm}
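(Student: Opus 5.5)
Under (\ref{pm}) the hypotheses of Lemmas \ref{lem5.1} and \ref{lem5.3} on both sides are automatic. Since $|\alpha|>\frac{1}{2}$ on $I_\pm\cap\mbox{supp}\,P$, we get $\int_{I_\pm}|\alpha|P(d\alpha)\geq \alpha_\pm P(I_\pm)>\frac{1}{2}P(I_\pm)$. This gives (\ref{root5}), because $\frac{1}{1+\sqrt{5}/2}<\frac{1}{2}$, and it gives (\ref{58.2}), because $P(I_\mp)<2\int_{I_\mp}|\alpha|P(d\alpha)<3\int_{I_\mp}|\alpha|P(d\alpha)$. Take $x_0\in{\cal S}$. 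By Lemma \ref{lem5.1}, $n^+(x_0)\geq 8\pi$ or $n^-(x_0)\geq 8\pi$. If $n^+(x_0)\geq 8\pi$, then $x_0\in{\cal S}_+$ (otherwise $n^+(x_0)=0$), and Lemma \ref{lem5.3} gives $x_0\not\in{\cal S}_-$; this is (\ref{64.1}). The other case gives (\ref{64.2}). In particular ${\cal S}_+\cap{\cal S}_-=\emptyset$, and every point of ${\cal S}_\pm$ carries $n^\pm\geq 8\pi$.

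Next comes the residual vanishing, via the argument of Proposition \ref{pro3}. Suppose $x_0\in{\cal S}_+$. Near $x_0$ only the positive part blows up, since $x_0\not\in{\cal S}_-$ and $\sharp{\cal S}<\infty$. So near $x_0$ we have $v_k\to v$ in $W^{1,q}$ and locally uniformly away from ${\cal S}$, as in Proposition \ref{pro3}. Away from $x_0$ the negative part is bounded near $x_0$, and its contribution to $-\Delta v_k$ is a bounded measure with no atom at $x_0$. Its Green potential is therefore bounded below, hence harmless for a lower bound. The Green representation then gives
\[ v(x)\geq \frac{n^+(x_0)}{2\pi}\log\frac{1}{d(x,x_0)}-C\geq 4\log\frac{1}{d(x,x_0)}-C. \]
The one point that needs care is that the negative measure $\tilde s^-dx$ has no atom at $x_0$. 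Once this holds, its potential is in $L^1$ and only lowers $v$ by a function that is finite almost everywhere. I would handle it with a cutoff, writing $v=v_1+v_2$ with $v_2$ harmonic near $x_0$. Because $\alpha\geq\alpha_+>\frac{1}{2}$ on $I_+$, this gives $\int_\Omega e^{\alpha v}=+\infty$ for all $\alpha\in I_+\cap\mbox{supp}\,P$. By Fatou, $\int_\Omega e^{\alpha v_k}\to\infty$, so $r(x,\alpha)=0$ for such $\alpha$, that is, $s^+=0$. Symmetrically, ${\cal S}_-\neq\emptyset$ implies $s^-=0$. If ${\cal S}_-=\emptyset$, nothing forces $s^-=0$, which matches the remark that $s^\mp\not\equiv0$ can happen.

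For $\sharp{\cal S}_+\leq1$ and for the attainment of $\overline{\lambda}$ at $K_+=I_+$, I would repeat the proof of Proposition \ref{pro4} restricted to $I_+$. Since $s^+=0$, testing $\mu_k$ on $\Omega\times I_+$ with $1$ and with $\alpha$ gives
\[ \overline{\lambda}P(I_+)=\sum_{{\cal S}_+}m^+(x_0), \qquad \overline{\lambda}\int_{I_+}\alpha P(d\alpha)=\sum_{{\cal S}_+}n^+(x_0). \]
At $x_0\in{\cal S}_+$ we have $n^-(x_0)=m^-(x_0)=0$, so (\ref{6}) reduces to $8\pi m^+(x_0)=n^+(x_0)^2$. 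Summing and using $\sum a_i^2\leq(\sum a_i)^2$, we get
\[ 8\pi\overline{\lambda}P(I_+)\leq \overline{\lambda}^2\left(\int_{I_+}\alpha P(d\alpha)\right)^2, \]
that is, $\overline{\lambda}\geq 8\pi P(I_+)/(\int_{I_+}\alpha P)^2$. Combined with (\ref{pm+}), this is an equality. Equality in $\sum a_i^2\leq(\sum a_i)^2$ with every $a_i\geq 8\pi>0$ forces $\sharp{\cal S}_+=1$. The minus side is the same. I expect the main obstacle to be the lower bound on $v$ near $x_0$ in the presence of the possibly nonvanishing opposite-sign residual.
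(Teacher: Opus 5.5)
Your outline follows the paper's proof of Lemma \ref{lem5.4} step by step. You check (\ref{root5}) and (\ref{58.2}) from $\alpha_\pm>\frac12$, and use Lemmas \ref{lem5.1} and \ref{lem5.3} to get ${\cal S}_+\cap{\cal S}_-=\emptyset$ together with (\ref{64.1})/(\ref{64.2}). You then use the argument of Proposition \ref{pro3} for $s^\pm=0$, and the summation argument of Proposition \ref{pro4} for $\sharp{\cal S}_\pm\leq 1$ and attainment at $K_\pm=I_\pm$. Everything except the residual-vanishing step is fine.

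That step, which you rightly flagged, does not hold as written.
\begin{itemize}
\item The opposite-sign part enters as $v=v^+-v^-$. So the fact that the Green potential $v^-$ of a nonnegative measure is bounded below only bounds $-v^-$ from \emph{above}; it is not harmless for a lower bound.
\item Consequently the displayed pointwise estimate $v\geq\frac{n^+(x_0)}{2\pi}\log\frac{1}{d(x,x_0)}-C$ is unavailable when $\tilde s^-\not\equiv0$ near $x_0$.
\item Being ``in $L^1$ and finite a.e.'' cannot rescue it: $10\log\frac{1}{d(x,x_0)}$ has both properties and would kill the divergence.
\item A cutoff cannot make the $\tilde s^-$ contribution harmonic near $x_0$ when $\tilde s^-$ charges every neighbourhood of $x_0$.
\end{itemize}
Absence of an atom is automatic, since $\tilde s^-\in L^1$. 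What you need is its quantitative form combined with exponential integrability.

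Here is one way to close it. Take $\rho$ small with $B_\rho(x_0)\cap{\cal S}_-=\emptyset$, and put $\epsilon_\rho=\int_{B_\rho(x_0)}\tilde s^-\to0$. Split $v^-=w_\rho+h_\rho$, where $w_\rho(x)=\int_{B_\rho(x_0)}G(x,y)\tilde s^-(y)dy$ and $h_\rho$ is bounded on $B_{\rho/2}(x_0)$. Jensen's inequality (the Brezis--Merle argument) gives $\int_{B_{\rho/2}(x_0)}e^{pw_\rho}<\infty$ whenever $p\epsilon_\rho<4\pi$. On $B_{\rho/2}(x_0)$ we have $e^{\alpha v}\geq c\,d(x,x_0)^{-4\alpha}e^{-\alpha w_\rho}$. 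Apply H\"older with $s<1$ and $4\alpha s\geq 2$, which is possible since $\alpha\geq\alpha_+>\frac12$:
\[ +\infty=\int_{B_{\rho/2}(x_0)}d(x,x_0)^{-4\alpha s}\leq C\left(\int_{B_{\rho/2}(x_0)}e^{\alpha v}\right)^{s}\left(\int_{B_{\rho/2}(x_0)}e^{\frac{\alpha s}{1-s}w_\rho}\right)^{1-s}. \]
The last factor is finite for $\rho$ small, so $\int_\Omega e^{\alpha v}=+\infty$ and your Fatou step goes through.

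The paper passes over the same point. It shows $\int_\Omega e^{\alpha_+v_k^+}\to+\infty$ and goes directly to $\mbox{supp}\,\mu\cap(\Omega\times I_+)\subset{\cal S}_+$, although only $v_k^-\geq-C$ is known. The same estimate, applied to $f_k^-$ uniformly in $k$, is what justifies that passage.
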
 

{\it Proof:} \ Inequality (\ref{pm}) excludes the possibility of (\ref{root5}). Also we have (\ref{58.2}) as in 
\begin{equation} 
3\int_{I_\pm}\alpha P(d\alpha)>\frac{3}{2}P(I_\pm)>P(I_\pm). 
 \label{63.1}
\end{equation} 
These properties imply 
\begin{equation} 
{\cal S}_+\cap {\cal S}_-=\emptyset
 \label{65.1}
\end{equation} 
by Lemma \ref{lem5.1} and Lemma \ref{lem5.3}, and therefore, any $x_0\in {\cal S}$ admits (\ref{64.1}) or (\ref{64.2}). 

Here we put 
\begin{equation} 
f_k^\pm=\lambda_k\int_{I_\pm}\vert \alpha\vert \frac{e^{\alpha v_k}}{\int_\Omega e^{\alpha v_k}}P(d\alpha)\geq 0 
 \label{68.1}
\end{equation}  
and define $v_k^\pm=v_k^\pm(x)$ by (\ref{x10}) for $(v,f)=(v_k^\pm, f_k^\pm)$, that is, 
\begin{equation}  
-\Delta v_k^\pm=f_k^\pm-\frac{1}{\vert \Omega\vert}\int_\Omega f_k^\pm, \ \int_\Omega f_k^\pm=0. 
 \label{68.2}
\end{equation} 
It holds that 
\begin{equation} 
v_k^\pm \geq -C, \ v_k=v_k^+-v_k^-.  
 \label{68.3}
\end{equation} 

If $x_0\in {\cal S}_\pm$ we obtain  
\[ f_k^\pm\rightharpoonup n^\pm(x_0)\delta_{x_0} +\tilde s^\pm \ \mbox{in ${\cal D}'(B_R(x_0))$}, \quad n^\pm(x_0)\geq 8\pi \] 
by (\ref{4}) and (\ref{5.5}) for $0<R\ll 1$, which implies 
\[ \lim_{k\rightarrow \infty}\int_\Omega e^{\pm\alpha_{\pm}v_k^\pm}=+\infty \]
by (\ref{pm}) as in the proof of Proposition \ref{pro3}.  If ${\cal S}_\pm\neq \emptyset$, therefore, it follows that 
\[ \mbox{supp} \ \mu(dxd\alpha)\cap (\Omega\times I_\pm)\subset {\cal S}_\pm, \] 
and then, the residual vanishing $s^\pm=0$ occurs in (\ref{rv}).  

If $x_0\in {\cal S}_+$, in particular, we obtain  
\[ n^+(x_0)\geq 8\pi, \ x_0\not\in {\cal S}_-, \ s^+=0.  \] 
Then the convergence (\ref{4}) implies 
\begin{eqnarray*} 
& & \iint_{I_+\times\Omega}\mu_k \ \rightarrow \ \sum_{x_0'\in {\cal S}_+}\int_{I_+}m(\alpha, x_0')P(d\alpha) \\ 
& & \iint_{I_+\times\Omega}\alpha \mu_k \ \rightarrow \ \sum_{x_0'\in {\cal S}_+}\int_{I_+}\alpha m(\alpha, x_0')P(d\alpha) 
\end{eqnarray*} 
for $\mu_k(dxd\alpha)$ defined by (\ref{muk}). Since 
\[ \iint_{I_+\times\Omega}\mu_k=\lambda_kP(I_+), \  \iint_{I_+\times\Omega}\alpha \mu_k=\lambda_k\int_{I_+}\alpha P(d\alpha) \] 
it follows that  
\begin{eqnarray} 
& & \overline{\lambda}P(I_+)=\sum_{x_0'\in {\cal S}_+}m(\alpha, x_0')P(d\alpha)  \label{67.1} \\ 
& & \overline{\lambda}\int_{I_+}\alpha P(d\alpha)=\sum_{x_0'\in {\cal S}_+}\int_{I_+}\alpha m(\alpha, x_0')P(d\alpha). 
 \label{67.2}
\end{eqnarray} 

We have, on the other hand,  
\[ 8\pi\int_{I_+}m(\alpha, x_0')=\left( \int_{I_+}\alpha m(\alpha, x_0')P(d\alpha)\right)^2, \ x_0'\in {\cal S}_+ \] 
by (\ref{6}), because  (\ref{65.1}) and $s^+=0$ hold true. It thus follows that 
\begin{eqnarray*} 
8\pi\overline{\lambda}P(I_+) & = & \sum_{x_0'\in {\cal S}_+}8\pi\int_{I_+}m(\alpha, x_0')P(d\alpha) \\ 
& = & \sum_{x_0'\in {\cal S}_+}\left( \int_{I_+}\alpha m(\alpha, x_0')P(d\alpha\right)^2 \\ 
& \leq & \left( \sum_{x_0'\in {\cal S}_+}\int_{I_+}\alpha m(\alpha, x_0')P(d\alpha)\right)^2 \\ 
& = & \overline{\lambda}^2\left( \int_{I_+}\alpha P(d\alpha)\right)^2
\end{eqnarray*}  
from (\ref{67.1})-(\ref{67.2}), and therefore,  
\[ \frac{8\pi P(I_+)}{\left( \int_{I_+}\alpha P(d\alpha)\right)^2}\leq \overline{\lambda}. \] 
This property means that $\overline{\lambda}$ is attained by $K_+=I_+$, and there arises aslo that ${\cal S}_+=\{x_0\}$. 

The other case of ${\cal S}_-\neq\emptyset$ is treated similarly. \hfill $\Box$ 

\vspace{2mm} 

We continue to assume (\ref{57.1}) and (\ref{pm}). Recall $(v_k,\lambda_k)$, $k=1,2,\cdots$, with (\ref{3.7}) and (\ref{111}). 

\vspace{2mm}

{\it Proof of Theorem \ref{thm3}:} \ We have either  
$(\sharp {\cal S}_+, \sharp{\cal S}_-)=(1,0)$, ${(\sharp \cal S}_+, \sharp {\cal S}_-)= (0,1)$, or $(\sharp{\cal S}_+, \sharp{\cal S}_-)=(1,1)$ by Lemma \ref{lem5.4}. Here we take the last case because the other cases are easier. 

We thus assume $\sharp {\cal S}_\pm=1$ and put ${\cal S}_\pm=\{ x_\pm\}$. By Lemma \ref{lem5.4} we have 
\[ f_k^\pm\rightharpoonup n^\pm(x_\pm)\delta_{x_\pm} \] 
for $f_k^\pm=f_k^\pm(x)$ defined by (\ref{68.1}), and therefore, 
\begin{equation} 
-\Delta v_k^\pm \rightharpoonup \mu^\pm\equiv n^{\pm}(x_\pm)\delta_{x_\pm}-\frac{n^\pm(x_\pm)}{\vert \Omega\vert} \quad \mbox{in ${\cal M}(\Omega)$} 
 \label{101}
\end{equation} 
and 
\begin{eqnarray} 
& & v_k^\pm\rightarrow v^\pm\equiv n^\pm G(\cdot, x_\pm) \nonumber\\ 
& & \mbox{in $C^\infty$ topology on each compact set in $\Omega\setminus \{ x_\pm\}$} 
 \label{102}
\end{eqnarray} 
by the elliptic regularity. 

Since 
\[ J_{\lambda_k}\geq J_{\overline{\lambda}} \] 
holds by 
\[ J_\lambda(v)=\frac{1}{2}\Vert \nabla v\Vert_2-\lambda\int_\Omega \left( \log \frac{1}{\vert \Omega\vert}\int_\Omega e^{\alpha v}\right)P(d\alpha)-\lambda\log \vert\Omega\vert \] 
it suffices to derive  
\begin{equation} 
J_{\overline{\lambda}}(v_k)\geq -C 
 \label{104}
\end{equation} 
to ensure (\ref{bounded}).  

In fact, using $v_k^\pm=v_k^\pm(x)$ defined by (\ref{68.2}), we obtain 
\begin{eqnarray*} 
J_{\overline{\lambda}}(v_k) 
& = & \frac{1}{2}
\Vert \nabla v_k\Vert_2^2-\overline{\lambda}\int_I\left( \log \int_\Omega e^{\alpha v_k}\right) P(d\alpha) 
\\ 
& = & \frac{1}{2}\Vert \nabla v_k\Vert_2^2-\overline{\lambda}\int_{I_+}\left( \log \int_\Omega e^{\alpha v_k}\right)P(d\alpha) \\ 
& & -\overline{\lambda} \int_{I_-}\left( \log \int_\Omega e^{\alpha v_k}\right)P(d\alpha) \\ 
& \geq & \frac{1}{2}\Vert \nabla v_k\Vert_2^2-\overline{\lambda}\int_{I_+}\left( \log \int_\Omega e^{\alpha v_k^++C}\right)P(d\alpha) \\ 
& & - \overline{\lambda}\int_{I_-}\log \left( \int_\Omega e^{-\alpha v_k^-+C}\right) P(d\alpha) 
\end{eqnarray*} 
by (\ref{68.3}), and therefore, 
\begin{eqnarray*} 
J_{\overline{\lambda}}(v_k)
& \geq  & \frac{1}{2}\Vert \nabla v_k\Vert_2^2
-\overline{\lambda}\int_{I_+}\left( \log \int_\Omega e^{\alpha v_k}\right)P(d\alpha) \\ 
& & -\overline{\lambda}\int_{I_-}\left(\log \int_\Omega e^{\alpha v_k}\right)P(d\alpha)-C. 
\end{eqnarray*}

Here we put 
\[ P=\tau_+P_++\tau_-P_-, \ \tau_\pm>0, \ \tau_1+\tau_-=1, \ \mbox{supp} \ P_\pm \subset I_\pm, \] 
recalling (\ref{57.1}), where $P_\pm=P_\pm(d\alpha)$ are probability measues on $I_\pm$. Then it holds that
 
\begin{eqnarray}\label{100} 
J_{\overline{\lambda}}(v_k) & \geq & 
\left\{ \frac{1}{2}\Vert \nabla v_k^+\Vert_2^2-\tau_+\overline{\lambda}
\int_{I_+}\left( \log \int_\Omega e^{\alpha v_k^+}\right)P_+(d\alpha)
\right\} \nonumber\\ 
& & +\left\{ \frac{1}{2}\Vert \nabla v_k^-\Vert_2^2-\tau_-\overline{\lambda}\int_{I_-}\left( \log \int_\Omega e^{-\alpha v_k^-}\right)P_-(d\alpha)\right\} \nonumber\\ 
& & -\int_\Omega \nabla v_k^+\cdot \nabla v_k^- -C. 
\end{eqnarray} 
We have 
\begin{eqnarray*} 
\overline{\lambda} & \leq & \inf \left\{ \frac{8\pi P(K_+)}{\left( \int_{K_+}\alpha P(K_+)\right)^2} \mid K_+\subset I_+\cap \mbox{supp} \ P, \ \mbox{Borel set} \right\} \\ 
& = &  \frac{1}{\tau_+}
 \inf \left\{ \frac{8\pi P_+(K_+)}{\left( \int_{K_+}\alpha P_+(K_+)\right)^2} \mid K_+\subset I_+\cap \mbox{supp} \ P_+, \ \mbox{Borel set} \right\} 
\end{eqnarray*} 
and therefore, 
\[ \overline{\lambda}_+\equiv \inf \left\{ \frac{8\pi P_+(K_+)}{\left( \int_{K_+}\alpha P_+(d\alpha)\right)^2} \mid K_+\subset I_+\cap \mbox{supp} \ P_+, \ \mbox{Borel set}
\right\} \geq \tau_+\overline{\lambda}. \] 
Similarly, we have 
\[ \overline{\lambda}_-
\equiv \inf 
\left\{ \frac{8\pi P_-(K_-)}{\left( \int_{K_-}\alpha P_-(d\alpha)\right)^2} 
\mid K_-\subset I_-\cap \mbox{supp} \ P_-, \ \mbox{Borel set}
\right\} \geq \tau_-\overline{\lambda}, 
\] 
and therefore,  
\begin{equation} 
J_{\overline{\lambda}}(v_k)\geq -\int_\Omega \nabla v_k^+\cdot \nabla v_k^--C 
 \label{103}
\end{equation} 
by Theorem \ref{thm2} because  
\[ \min \mbox{supp} \ P_+=\alpha_+>\frac{1}{2}, \ \max \mbox{supp} \ P_-=-\alpha_-<-\frac{1}{2} \] 
holds true.

We take $0\leq \rho_\pm \in C^\infty(\Omega)$ such that 
\[ \rho_++\rho_-=1, \quad \rho_\pm=\left\{ \begin{array}{ll} 
1, & \mbox{near $x_\pm$} \\ 
0, & \mbox{near $x_\mp$}. \end{array} \right.  \] 
Then, we have    
\[ \int_\Omega \nabla v_k^+\cdot\nabla v_k^-=\int_\Omega \rho_+\nabla v_k^+\cdot \nabla v_k^-+\int_\Omega \rho_-\nabla v_k^+\cdot \nabla v_k^- \] 
with 
\begin{eqnarray*} 
\int_\Omega \rho_+\nabla v_k^+\cdot \nabla v_k^- & = & -\int_\Omega [\nabla\cdot (\rho_+\nabla v_k^+)]v_k^- \\ 
& = & -\int_\Omega (\nabla\rho_+\cdot \nabla v_k^+)v_k^-+\int_\Omega (-\Delta v_k^+)\rho_+v_k^-. 
\end{eqnarray*} 
Since $\rho_+=0$ near $x_-$ and $\nabla \rho_+=0$ near $x_\pm$ both, it follows that 
\[ \int_\Omega\rho_+\nabla v_k^+\cdot \nabla v_k^-\rightarrow -\int_\Omega (\nabla \rho_+\cdot \nabla v^+)v^-+\langle \rho_+v^-, \mu^+\rangle \] 
from (\ref{101})-(\ref{102}). 

Similarly, we obtain 
\[ \int_\Omega\rho_-\nabla v_k^+\cdot \nabla v_k^-\rightarrow -\int_\Omega (\nabla \rho_-\cdot \nabla v^-)v^++\langle \rho_-v^+, \mu^-\rangle \] 
and  hence (\ref{104}) by (\ref{103}). \hfill $\Box$

\section*{Acknowledgement}
The authors thank Prof. H. Ohtsuka and Prof. R. Takahashi for valuable comments on our study. The first author is supported by JSPS Grand-in-Aid for Scientific Reserach 25K07067.

{}
\end{document}